\documentclass{article}
\usepackage{amssymb,amsmath}
\usepackage[pdftex]{graphicx}
\usepackage{ dsfont }
\usepackage{lscape}

\def\qed{\hfill {\hbox{${\vcenter{\vbox{               
   \hrule height 0.4pt\hbox{\vrule width 0.4pt height 6pt
   \kern5pt\vrule width 0.4pt}\hrule height 0.4pt}}}$}}}

\newtheorem{theorem}{Theorem}
\newtheorem{definition}{Definition}

\newtheorem{proposition}[theorem]{Proposition}

\newtheorem{example}{Example}
\newtheorem{remark}[example]{Remark}

\newtheorem{conjecture}{Conjecture}

\newenvironment{proof}[1][Proof]{\smallskip\noindent{\bf #1.}\quad}%
{\qed\par\medskip}

\title{\Large \textbf{On the Coloring of Pseudoknots}}

\author{
Allison Henrich
 \footnote{henricha@seattleu.edu, Seattle University, Seattle, WA 98122, United States}\hspace{1cm}
Slavik Jablan\footnote{sjablan@gmail.com, The Mathematical Institute, Belgrade, 11000, Serbia}}

\begin{document}

\maketitle

\begin{abstract}
Pseudodiagrams are diagrams of knots where some information about which strand goes over/under at certain crossings may be missing. Pseudoknots are equivalence classes of pseudodiagrams, with equivalence defined by a class of Reidemeister-type moves. In this paper, we introduce two natural extensions of classical knot colorability to this broader class of knot-like objects. We use these definitions to define the determinant of a pseudoknot (i.e. the pseudodeterminant) that agrees with the classical determinant for classical knots. Moreover, we extend Conway notation to pseudoknots to facilitate the investigation of families of pseudoknots and links. The general formulae for pseudodeterminants of pseudoknot families may then be used as a criterion for $p$-colorability of pseudoknots.
\end{abstract}


\section{Introduction}
\subsection{Pseudodiagrams and pseudoknots}

Recently, Ryo Hanaki introduced the notion of a {\em pseudodiagram} of a knot, link, and spatial graph~\cite{hanaki}. A pseudodiagram of a knot or link is a knot or link diagram that may be missing some crossing information, as in Figure~\ref{f0}. In other words, at some crossings in a pseudodiagram, it is unknown which strand passes over and which passes under. These undetermined crossings are called {\em precrossings} and are pictured simply as self-intersections. Special classes of pseudodiagrams are knot diagrams and knot {\em shadows}, i.e. pseudodiagrams containing {\em only} precrossings. Pseudodiagrams were originally considered because of their potential to serve as useful models for biological objects related to DNA, but they are interesting objects in their own right. Pseudodiagrams and their virtual counterparts have also been studied in~\cite{SMALL}.

\begin{figure}[th]
\centerline{\includegraphics[width=3.8in]{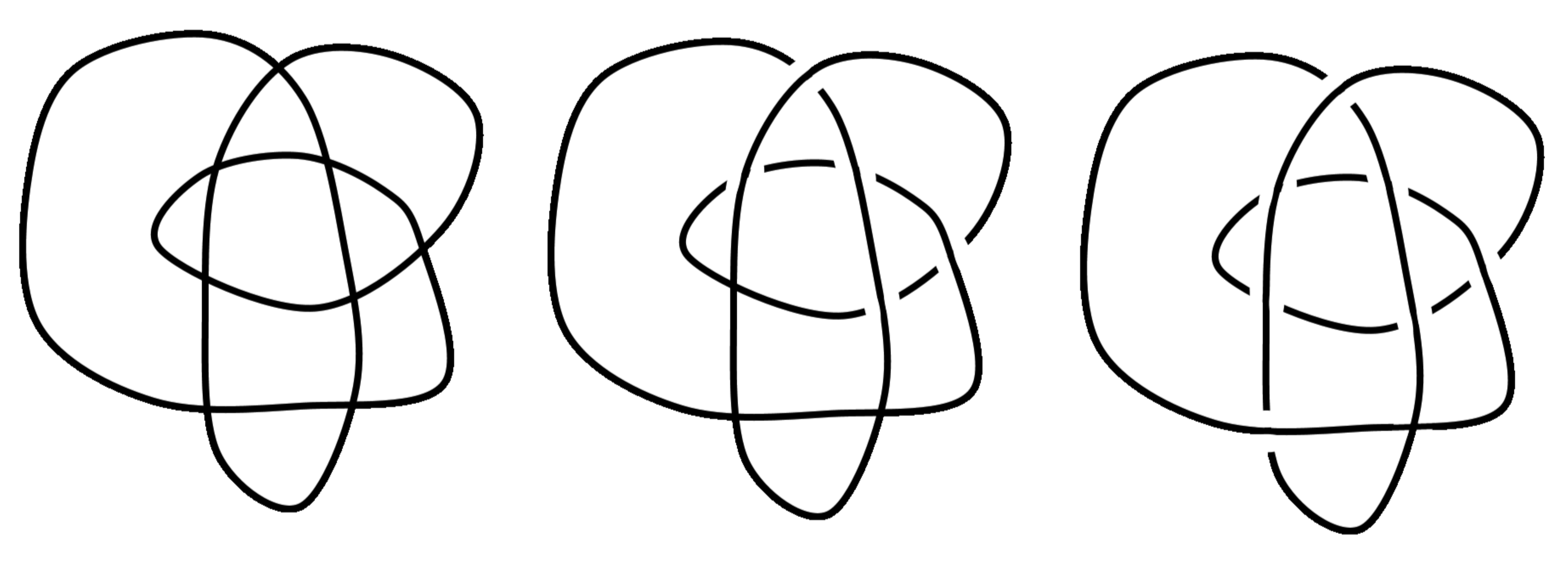}} \vspace*{8pt}
\caption{Examples of pseudodiagrams. \label{f0}}
\end{figure}

In~\cite{ps}, the idea of pseudodiagrams is extended to {\em pseudoknots} (and {\em pseudolinks}), i.e. equivalence classes of pseudodiagrams modulo pseudo-Reidemeister moves (shown in Figure~\ref{rmoves}). The primary invariant explored in~\cite{ps} is the {\em WeRe-set} (Weighted Resolution set) of a pseudoknot. The WeRe-set of a pseudoknot is the set of ordered pairs where the first entry of a pair is a knot that may be realized by resolving all crossings in a pseudodiagram of the pseudoknot, and the second entry is the probability of obtaining the given knot type in a random resolution of precrossings.
While the WeRe-set is a powerful invariant, it may be difficult to compute for pseudoknots with large precrossing numbers. Thus, our aim is to expand the number of tools that can be used for classification of pseudoknots. One natural invariant to consider, pseudoknot colorability, is the subject of this paper.

\begin{figure}[h]
\centerline{\includegraphics[width=4.3in]{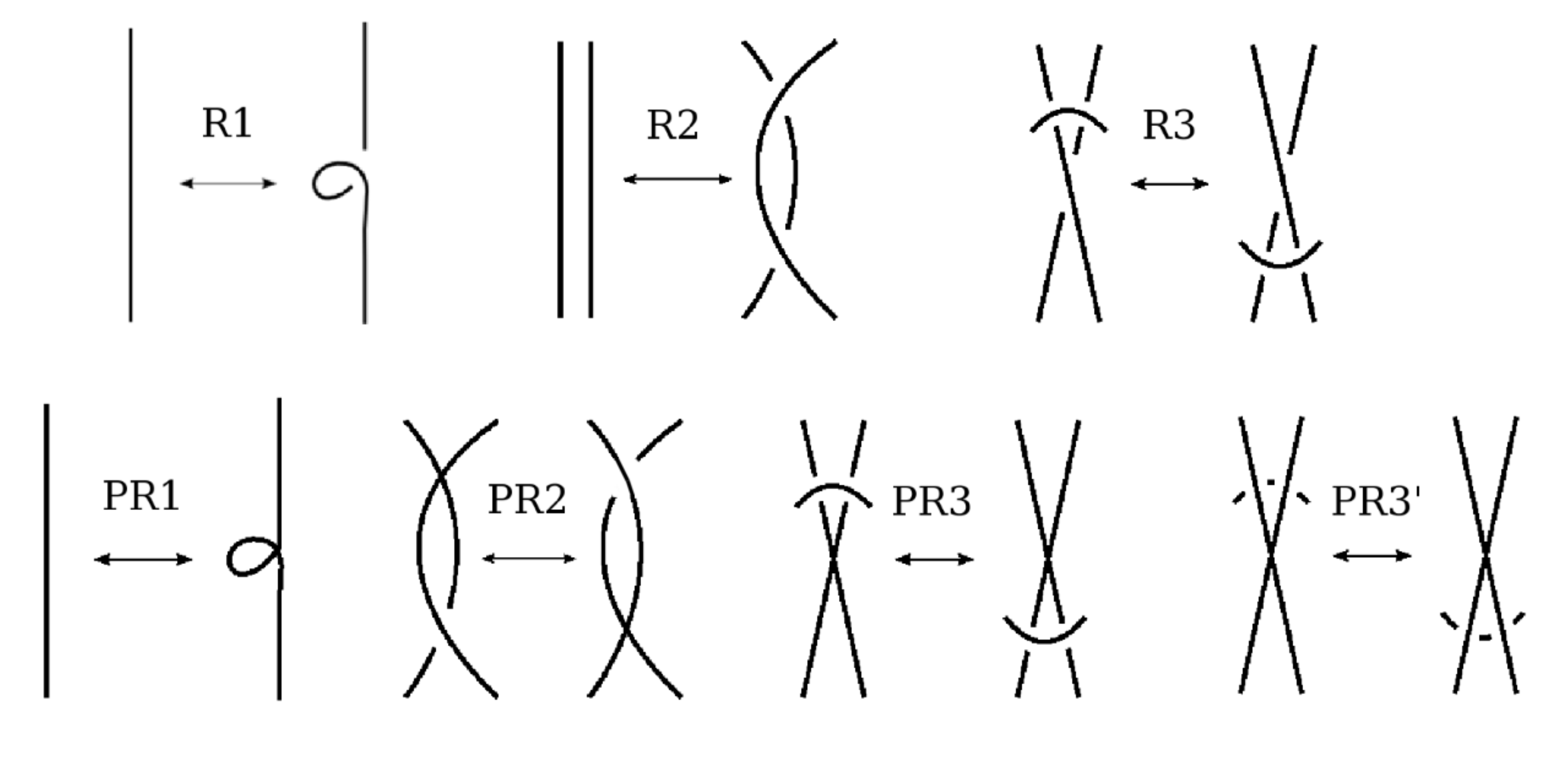}} \vspace*{8pt}
\caption{The pseudo-Reidemeister moves. \label{rmoves}}
\end{figure}

\subsection{Knot colorability}

For knots, a $p$-{\em coloring} of a diagram is an assignment of elements of $\mathds{Z} /p \mathds{Z}$ to arcs in the diagram such that at each crossing, two times the element assigned to the over-arc equals the sum of the elements assigned to the under-arcs. It can easily be shown that if one diagram of a knot has a non-trivial $p$-coloring, then so does every other diagram of the knot. We say that a knot is $p$-{\em colorable} (or {\em colorable mod} $p$) if there exists a non-trivial $p$-coloring of each of its diagrams.

If $p=3$, we note that the coloring condition for the crossings is satisfied if and only if the three arcs that meet at a crossing are either colored with all different ``colors'' (that is, elements of $\mathds{Z} /3 \mathds{Z}$) or all the same color.

In general, to determine for which $p$ a given knot is $p$-colorable, we begin by fixing a diagram $D$ of the knot. In $D$, we assign a variable to each arc and write an equation of the form $x+y-2z=0$ for every crossing, where $x$ and $y$ represent the variables assigned to the under-arcs and $z$ represents the variable assigned to the over-arc. We obtain in this way a system of $n$ equations in $n$ variables, where $n$ is the crossing number of the diagram (which is also equal to the number of arcs in the diagram). Forming a matrix corresponding to this system, we say the {\em determinant} of the knot is the determinant of any $n-1$ submatrix (which is independent of the particular submatrix chosen). The knot given by diagram $D$, then, is $p$-colorable if and only if $p$ divides this determinant.

\subsection{Pseudoknot colorability}
There are two extensions of the classical notion of $p$-colorability we'd like to explore in this paper. One notion we refer to as strong $p$-colorability, and a weaker notion we will simply refer to as $p$-colorability. First, let us be clear that arcs in a pseduodiagram begin and end at classical crossings only. In other words, arcs may merely pass through precrossings. 

\begin{definition}\label{tri} A pseudodiagram is {\em strong $p$-colorable} (or {\em strong colorable mod $p$}) if the arcs of the diagram can be ``colored" (i.e. labelled) with elements of $\mathds{Z}/p\mathds{Z}$, subject to the following conditions.
\begin{enumerate}
\item Given a classical crossing, two times the element assigned to the over-arc is equal to the sum of the elements assigned to the under-arcs, mod $p$.
\item For each precrossing $P$, there is a neighborhood of $P$ and an element $a\in\mathds{Z}/p\mathds{Z}$ such that all arcs of the diagram in the neighborhood are colored with $a$.
\end{enumerate}
\end{definition}

In Section~\ref{pk_color}, we show that this notion of colorability is a pseudoknot invariant.  We also explore the following notion of colorability. Two pseudoknots are said to be {\em $p$-colorable} (or {\em colorable mod $p$}) if all of its resolutions are $p$-colorable classical knots. This, too is an invariant. It is not difficult to see that if a pseudoknot is strong $p$-colorable, then it is $p$-colorable. We use $p$-colorability for pseudoknots to define the {\em pseudodeterminant} of a pseudoknot.

Before we discuss colorability further in Section~\ref{pk_color}, we begin the body of our paper in Section~\ref{conway_bg} by providing background on Conway notation for classical knots and extending this notation to pseudoknots. This will facilitate our discussion of the colorability of families of pseudoknots in Section~\ref{example_conj}.

\section{Conway notation for classical and pseudoknots}\label{conway_bg}

\subsection{Classical Conway notation}
All classical knots and links (or shortly $KL$s) can be described using Conway notation. For readers unfamiliar with the conventions, we describe Conway notation as it was introduced in Conway's seminal paper
\cite{1} published in 1967, and effectively used since (e.g.,
\cite{7a,8a,9a}). Conway symbols of knots with up to $10$ crossings and
links with at most $9$ crossings are given in the Appendix of the
book \cite{7a}.

\begin{figure}[th]
\centerline{\includegraphics[width=2in]{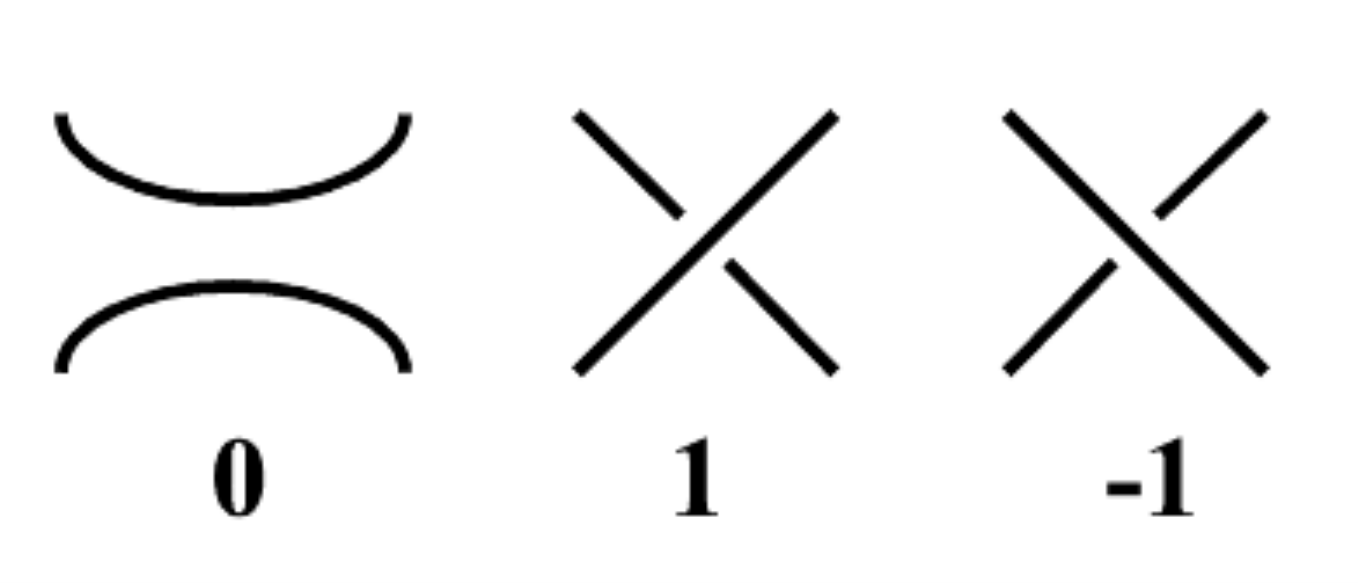}} \vspace*{8pt}
\caption{The elementary tangles. \label{con1}}
\end{figure}

The main building blocks in the Conway notation are elementary
tangles. We distinguish three elementary tangles,  shown in Fig.
\ref{con1}. These are commonly denoted by 0, 1, and $-1$. All other tangles can be
obtained by combining elementary tangles, while 0 and 1 are
sufficient for generating alternating $KL$s.
Elementary tangles can be combined by the following three operations: {\em
sum}, {\em product}, and {\em ramification} (Figs.
\ref{con2}-\ref{con3}). Given tangles $a$ and $b$, we denote by $-a$ the image of $a$ under
reflection about the line joining the Northwest and Southeast corners of a the tangle diagram. The sum of $a$ and $b$ is denoted by $a+b$, while the product $a\,b$ is defined as $a\,b = -a+b$. Furthermore, the binary operation {\em ramification} is given by $(a,b) = -a-b$.

\begin{figure}[th]
\centerline{\includegraphics[width=3.8in]{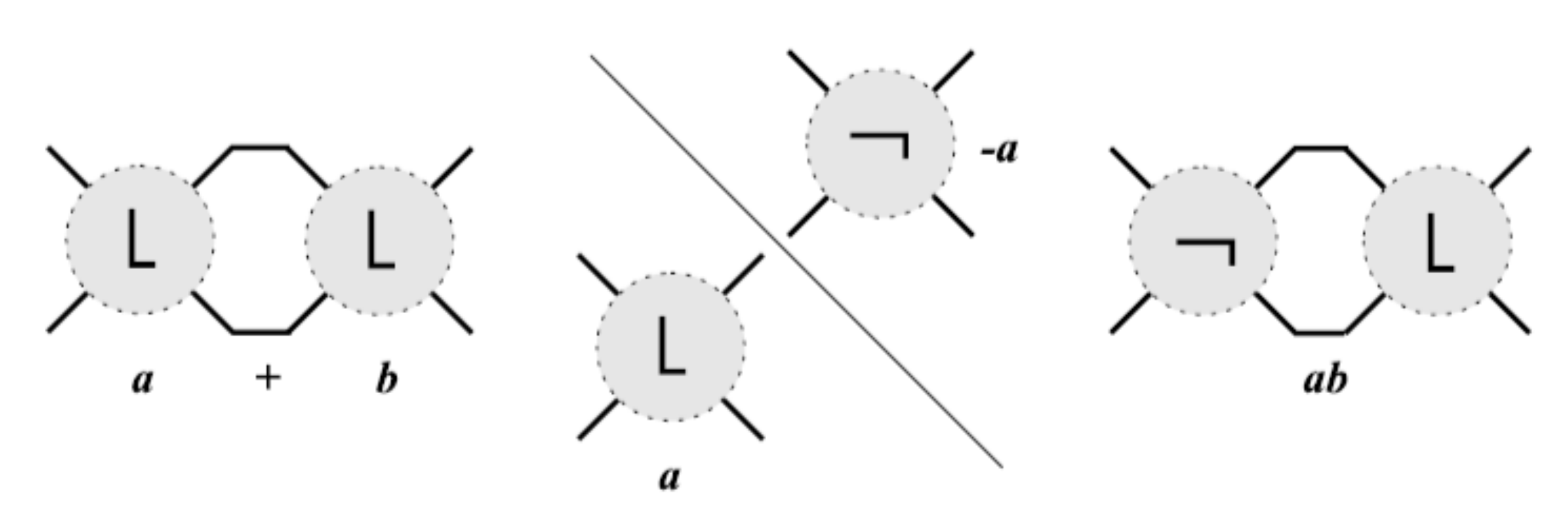}} \vspace*{8pt}
\caption{A sum and product of tangles. \label{con2}}
\end{figure}

\begin{figure}[th]
\centerline{\includegraphics[width=1.4in]{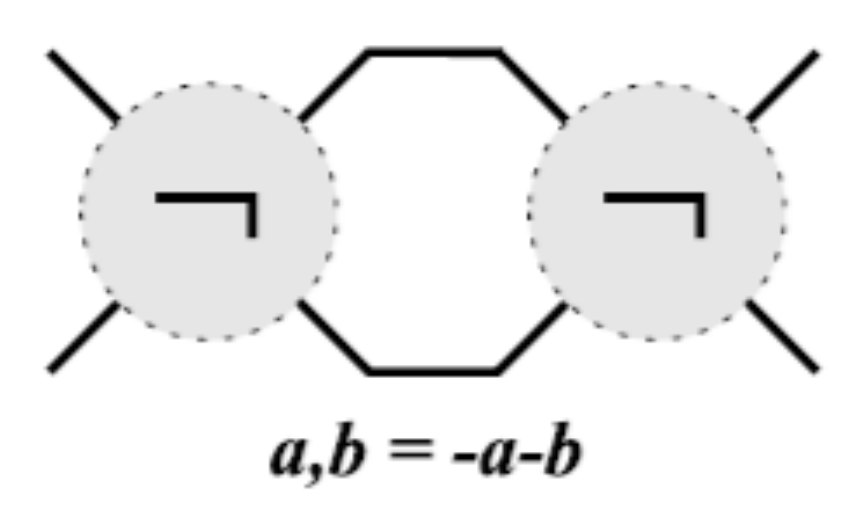}} \vspace*{8pt}
\caption{Ramification of tangles. \label{con3}}
\end{figure}

A tangle can be closed in two ways without
introducing additional crossings. We can join the pairs of ends at the Northeast and Northwest corners of the tangle diagram as well as the
Southeast and Southwest ends of the tangle to obtain the {\em numerator closure}. The closure obtained by joining the Northeast and Southeast ends as well as the Northwest and Southwest ends is called the {\em
denominator closure} (Fig. \ref{con5}a,b).

\begin{figure}[th]
\centerline{\includegraphics[width=2.4in]{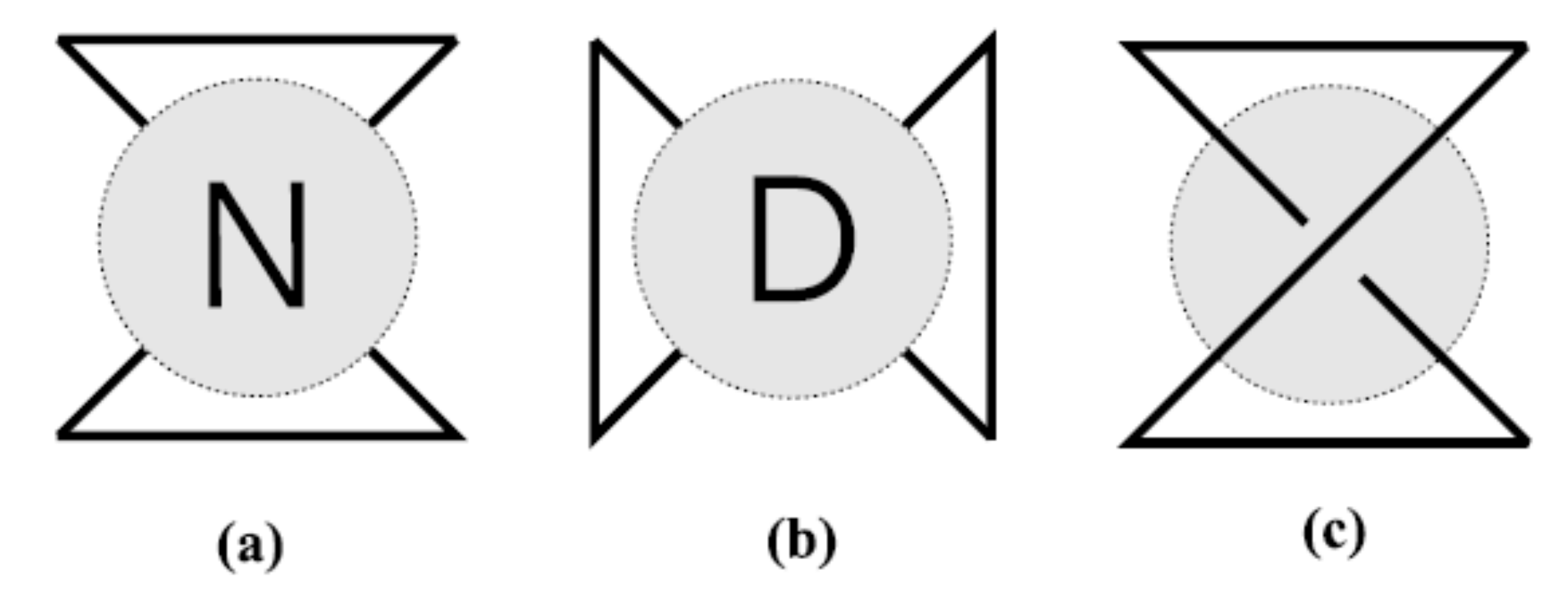}} \vspace*{8pt}
\caption{(a) Numerator closure; (b) denominator closure; (c) basic
polyhedron $1^*$. \label{con5}}
\end{figure}

\begin{definition}
A {\em rational tangle} is any tangle that is equivalent to a finite product of elementary tangles.
A {\em rational $KL$} is a knot or link that can be obtained by taking the numerator closure of a rational tangle.
\end{definition}

\begin{definition}
A tangle is {\em algebraic} if it can be obtained from elementary
tangles using the operations of sum and product. An {\em
algebraic} $KL$ is a knot or link that can be obtained by taking the numerator closure of an algebraic tangle.
\end{definition}

\begin{example} A {\em Montesinos tangle} and the corresponding {\em Montesinos
$KL$} consist of $n$ alternating rational tangles $t_i$, including at
least three non-elementary tangles $t_k$ for $k\in
\{1,2,\ldots,n\}$. (See Fig. \ref{con6}.) We denote a Montesinos link by $t_1,t_2,\ldots,t_n$, where $n\ge 3$. The number of tangles, $n$, is called the
{\em length of the Montesinos tangle}.

If all tangles $t_i$ for $i=1,...,n$ are {\em integer tangles} (i.e. tangles of the form $1+1+\cdots 1$ or $-1-1\cdots-1$), where $n\ge 3$, we obtain {\em pretzel}
$KL$s.\end{example}

\begin{figure}[th]
\centerline{\includegraphics[width=1.4in]{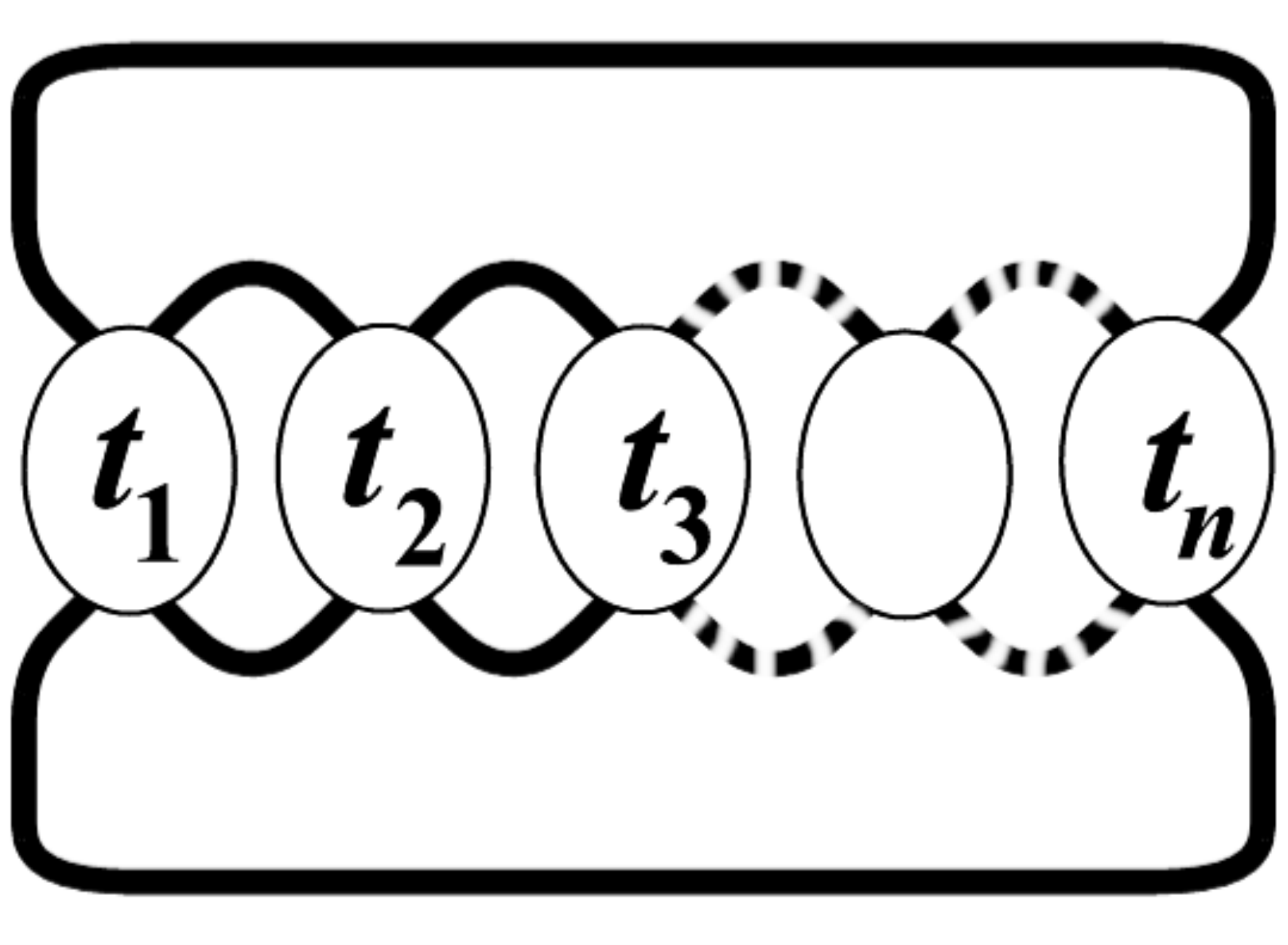}} \vspace*{8pt}
\caption{Montesinos link $t_1,t_2,\ldots,t_n$. \label{con6}}
\end{figure}

\begin{definition}
A {\em basic polyhedron} is a $4$-regular, $4$-edge-connected, at least $2$-vertex connected plane graph.
\end{definition}

A basic polyhedron \cite{1,7a,8a,9a} of a given $KL$ can be identified by
recursively collapsing all bigons in a $KL$ diagram, until no bigons remain \ref{collapse}.

\begin{figure}[th]
\centerline{\includegraphics[width=4.4in]{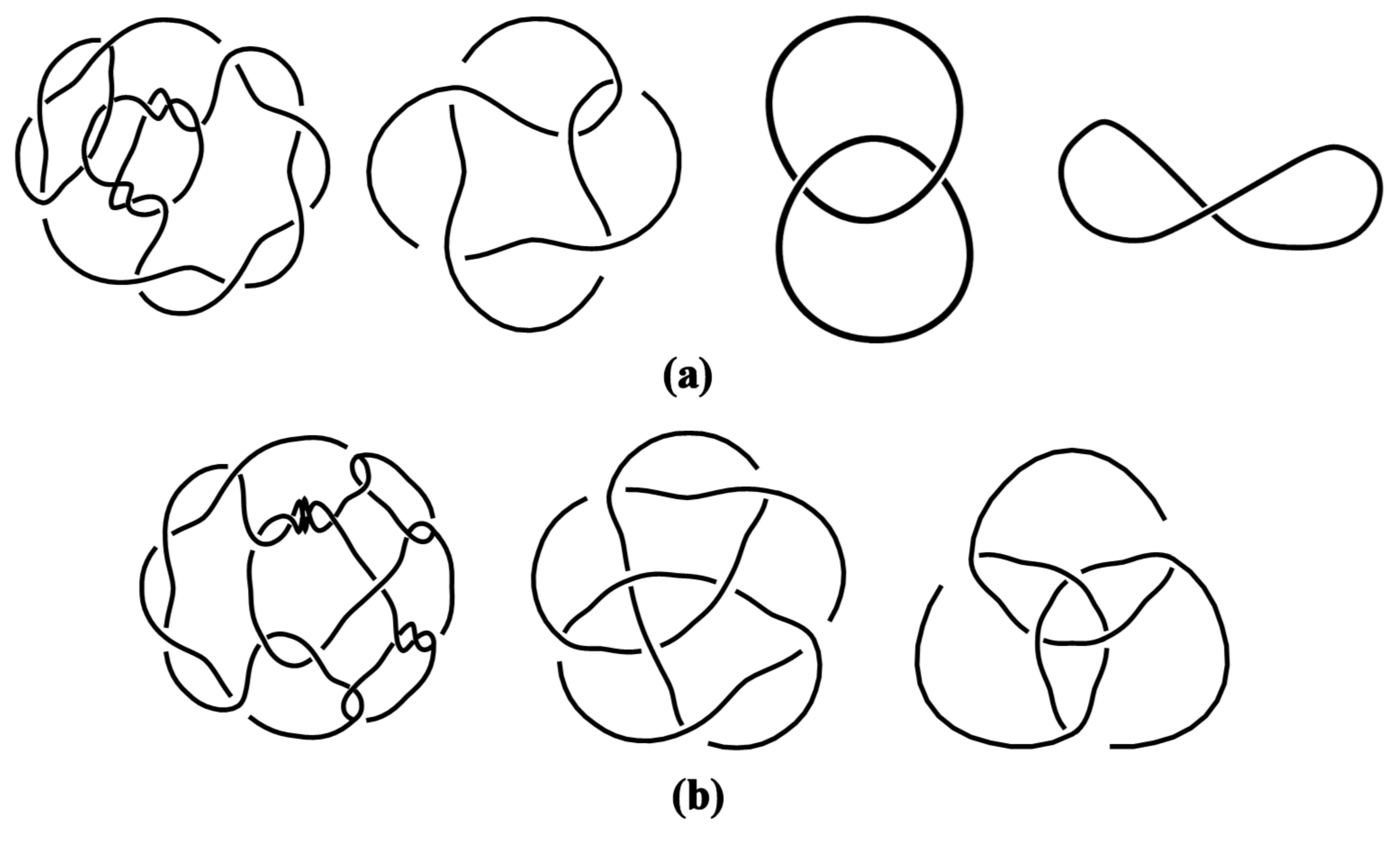}} \vspace*{8pt}
\caption{(a) Bigon collapse in the link $(5,3,2)\,(3,3)\rightarrow (1,1,1)\,(1,1)=3\,2 \rightarrow 1^*$; (b) bigon collapse in the knot $6^*(3,2).2.4.5.(2,2) \rightarrow 6^*(1,1).1.1.1.(1,1)=6^*2::2 \rightarrow 6^*$. \label{collapse}}
\end{figure}

The basic polyhedron $1^*$ is illustrated in Fig. \ref{con5}c, and the
basic polyhedron $6^*$ is illustrated in Fig.
\ref{con8}.

\begin{definition}
A link $L$ is {\em algebraic} or a $1^*$-{\em link} if there
exists at least one diagram of $L$ which can be reduced to the basic
polyhedron $1^*$ by a finite sequence of bigon collapses. Otherwise,
it is a {\em non-algebraic} or {\em polyhedral link}.
\end{definition}

Conway notation for polyhedral $KL$s begins with the symbol
denoting a basic polyhedron (together with a standard ordering of its vertices). The symbol
$n^{*m}=n^{*m}1.1.\ldots.1$ (where $*m$ is the shorthand for a sequence of $m$ stars)
denotes the $m$-th basic polyhedron in the list of basic polyhedra
with $n$ vertices. A $KL$ obtained from the basic polyhedron $n^{*m}$
by substituting the first $k$ vertices with tangles $t_1$, $\ldots$, $t_k$, where $k \leq n$ is denoted by $n^{*m}t_1.t_2.\ldots .t_k$. A common shorthand convention allows us to replace a substituent of value 1 by a colon. If $k<n$, we replace each of the vertices $k+1, ..., n$ by the elementary tangle 1. Let us illustrate how this notation works with some examples. The notation $6^*2:2:2\,0$ is shorthand for
$6^*2.1.2.1.2\,0.1$, and $6^*2\,1.2.3\,2:-2\,2\,0$ is shorthand for
$6^*2\,1.2.3\,2.1.-2\,2\,0.1$ Both examples are pictured in Fig. \ref{con8}.

\begin{figure}[th]
\centerline{\includegraphics[width=4.8in]{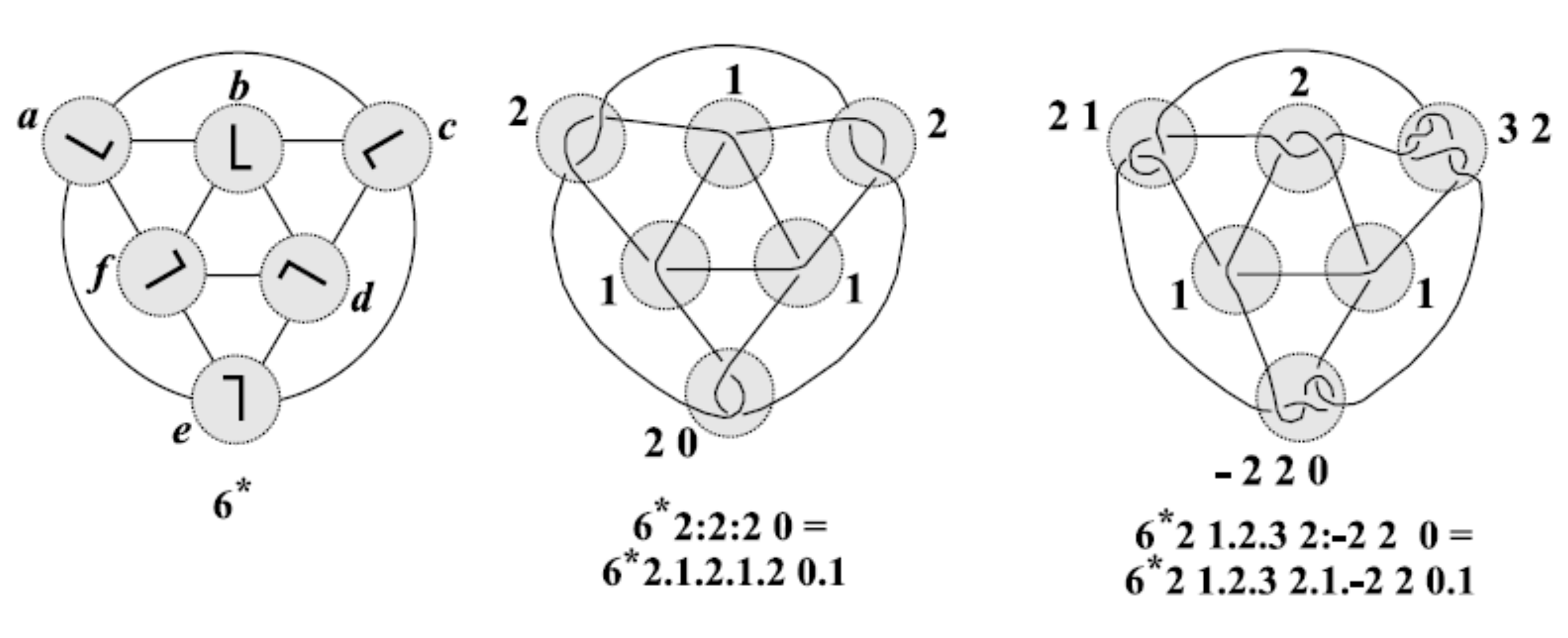}} \vspace*{8pt}
\caption{Basic polyhedron $6^*$ and the knots $6^*2.1.2.1.2\,0.1$
and $6^*2\,1.2.3\,2:-2\,2\,0$. \label{con8}}
\end{figure}

We now recall a useful technique for defining a family of $KL$s from a particular knot or link.

\begin{definition}
For a knot or link, $L$, given using unreduced\footnote{The Conway
notation is called {\em unreduced} if the 1's that denote replacing vertices by the elementary tangle 1 are not omitted. E.g. $6^*2.1.2.1.2\,0.1$ rather than $6^*2:2:2\,0$.} Conway
notation $C(L)$, denote by $S$ the set of numbers in the Conway symbol,
where numbers denoting basic polyhedra and zeros (determining
the position of tangles in the vertices of polyhedron) are excluded. Let
$\tilde S=\{a_1,a_2, \ldots, a_k\}$ be a non-empty subset of $S$.
The family $F_{\tilde S}(L)$ of $KL$s derived from $L$ consists
of all knots or links $L'$ whose Conway symbol is obtained by
substituting all $ a_i\neq \pm 1$, by $sgn(a_i) |a_i+k_{a_i}|$,
$|a_i+k_{a_i}| >1$, where $k_{a_i} \in Z$.
\end{definition}

An infinite subset of a family of $KL$s is called {\em subfamily}. Note that if all
$k_{a_i}$ are even integers, the number of components is preserved
within the corresponding subfamilies, i.e., adding full-twists
preserves the number of components inside the subfamilies.

\begin{definition}
A link given by a Conway symbol containing only tangles $\pm 1$ and  $\pm 2$
is called a {\em source link}. A link given by a Conway
symbol containing only tangles $\pm 1$, $\pm 2$, or $ \pm 3$ is
called a {\em generating link}.
\end{definition}

For example, the Hopf link, $2$, (which is link $2_1^2$ in Rolfsen's notation) is
the source link of the simplest link family, $p$, ($p=2,3,\ldots $)
shown in Fig. \ref{con12}. The Hopf link and the trefoil, $3$, (knot $3_1$ in the
classical notation) are generating links of this family. A family of
$KL$s is usually derived from its source link by substituting $a_i
\in \tilde S$, where $a_i=\pm 2$, by $sgn(a_i) (2+k)$, $k=1,2,3,\ldots $
(see Def. 5 and Def. 6).

\begin{figure}[th]
\centerline{\includegraphics[width=3in]{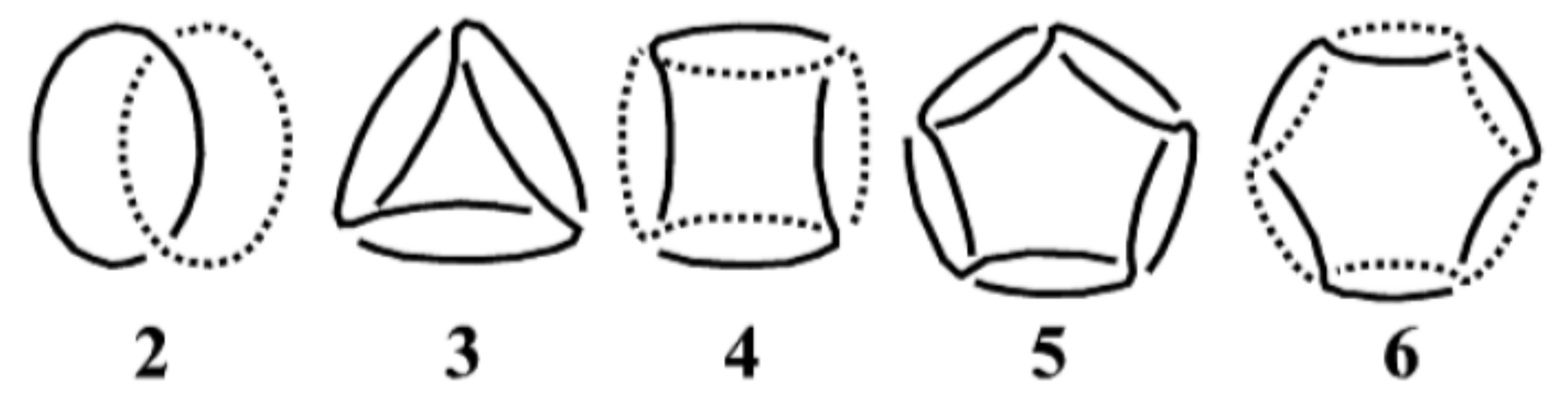}} \vspace*{8pt}
\caption{Hopf link $2_1^2=2$ and its family $p$ ($p=2,3,\ldots $).
\label{con12}}
\end{figure}

\subsection{Conway notation for pseudoknots and links}

Analogous to the Conway notation for classical $KL$s
we use extended Conway notation for pseudodiagrams and pseudolinks, by adding to the list of the elementary tangles $0$, $1$, $-1$ the elementary tangle $i$ denoting a precrossing. To be clear, the extended
Conway notation for pseudoknots differs from the standard one in
the following way:
\begin{itemize}
  \item pseudocrossings are denoted by $i$.
  \item sequence of $n$ classical crossings $1,\ldots ,1$ (positive $n$-twist) is denoted by $1^n$
  \item sequence of $n$ classical negative crossings  $-1,\ldots ,-1$ (negative $n$-twist) is denoted by
$(-1)^n$
\end{itemize}

The convention introduced in the Conway notation extended to pseudodiagrams is natural, given that every positive $n$-twist can be denoted by $1,\ldots ,1=1^n$, and every negative $n$-twist by $-1,\ldots ,-1.$ This convention extends to pseudoknot families. For example, consider the simplest $KL$ family $2_1^2$, $3_1$, $4_1^2$, $5_1$, $\ldots$ of knots and links, generated by the Hopf link, $2$, and the trefoil, $3$, where the family member with $p$ crossings is denoted by $p$, as described above. We generate a family of pseudoknots and links by substituting a crossing in each member of the family by a precrossing, we obtain the family of pseudoknots and pseudolinks $(i,1)$, $(i,1^2)$, $(i,1^3)$, $(i,1^4)$, $\ldots$, and in general $(i,1^{p-1})$  for $p\ge 2$ (Fig. \ref{pseudofam}).

\begin{figure}[th]
\centerline{\includegraphics[width=4in]{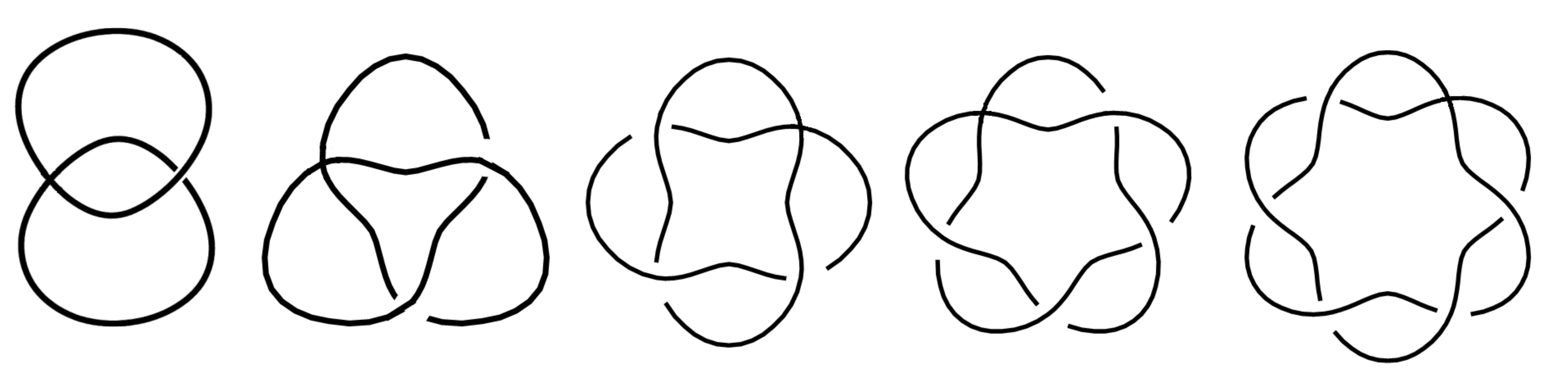}} \vspace*{8pt}
\caption{Family of pseudoknots and pseudolinks   $i,1^{p-1}$ ($p\ge 2$).
\label{pseudofam}}
\end{figure}

\section{Colorability of pseudoknots}\label{pk_color}

We now return to our study of coloring invariants. In our introduction, we defined two notions of colorability for pseudoknots: strong $p$-colorability and $p$-colorability. We prove that these notions both yield pseudoknot invariants.

\begin{theorem}
Strong $p$-colorability is an invariant of pseudoknots.
\end{theorem}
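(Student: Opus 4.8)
The plan is to show that the existence of a strong $p$-coloring is preserved under each of the pseudo-Reidemeister moves in Figure~\ref{rmoves}. Since any two pseudodiagrams of the same pseudoknot are connected by a finite sequence of such moves, it suffices to check one move at a time, and for each move to exhibit a bijection (or at least a correspondence preserving non-triviality) between strong $p$-colorings of the two local pictures that agrees with the given coloring outside the region where the move is performed. The classical Reidemeister moves $R1$, $R2$, $R3$ are handled exactly as in the classical case: the boundary arcs entering the tangle retain their colors, and conditions of type (1) at the classical crossings involved force a unique consistent extension to the interior arcs, so a coloring on one side restricts/extends to a coloring on the other side. Here one must only note that these manipulations never create or destroy a precrossing, so condition (2) is vacuously unaffected.

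The substantive cases are the pseudo-moves involving precrossings: the pseudo-$R1$ move (adding or removing a kink that is a precrossing), the pseudo-$R2$ move (adding or removing a pair of precrossings between two strands), and the pseudo-$R3$ move (sliding a strand past a precrossing, of which there are two versions depending on whether the triangle contains a classical crossing or a precrossing, plus the mixed versions where some of the three crossings are classical and some are pre). The key observation that makes all of these work is condition (2): near every precrossing all local arcs carry a single color $a$. For pseudo-$R1$: a precrossing kink is a neighborhood in which the single arc must be colored $a$; removing it leaves one arc still colored $a$, so nothing changes. For pseudo-$R2$: the two precrossings force the two parallel strands, throughout the bigon region, to all share one color $a$ (the two precrossings' neighborhoods overlap or are joined by the bigon arcs), which is precisely the color the two strands already have on the boundary when the bigon is absent; conversely, given a coloring before the move, both boundary strands must already agree because a strong coloring of the rest of the diagram plus condition~(2) will force it — here I need to check that the two boundary strands of an $R2$-bigon always carry equal colors in any strong coloring, which follows because after pushing one precrossing around they are locally forced equal. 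For pseudo-$R3$: all arcs in a neighborhood of the precrossing(s) are colored $a$; the strand being slid across must therefore also be colored $a$ on both sides of the move, and the relations at whatever classical crossing may be present are symmetric under the slide in the standard way.

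The main obstacle I anticipate is not any single move but the bookkeeping needed to make condition (2) precise and then verify it is genuinely local: "there is a neighborhood of $P$ and an element $a$ such that all arcs in the neighborhood are colored $a$" must be interpreted so that the neighborhood can be taken small enough to sit inside the disk where a Reidemeister move acts, yet large enough that it pins down the colors of exactly the arc-strands passing through $P$. The delicate point is the pseudo-$R2$ move and, more generally, any configuration where two distinct arcs of the ambient diagram both pass through a neighborhood of a precrossing: condition (2) then forces those two arcs to have the same color, and I must confirm that this forced equality is exactly what is needed for the coloring to descend across the move and, crucially, that it does not over-constrain — i.e., that a strong $p$-coloring on the simpler side always does extend, rather than merely restricts. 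Concretely, for the $R2$ case one checks that the two strands entering the bigon region are connected (through the move disk, outside it they are separate arcs) in such a way that any strong coloring assigns them a common value, so that collapsing the bigon is color-preserving in both directions. Once this locality is nailed down, the remaining verifications are the routine linear-algebra checks of the classical argument, and the theorem follows by induction on the length of a sequence of pseudo-Reidemeister moves connecting two diagrams.
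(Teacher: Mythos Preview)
Your overall plan—check each pseudo-Reidemeister move locally—is the same as the paper's, but two of your case analyses rest on an incorrect picture of the moves, and this is a genuine gap rather than a bookkeeping issue.

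First, the PR2 move is \emph{not} a bigon with two precrossings; it is a bigon with one precrossing and one \emph{classical} crossing. Your argument (``the two precrossings force the two parallel strands \dots to all share one color $a$'') therefore analyses the wrong configuration. The paper's argument uses the single precrossing to force two of the three local arcs to a common color $a$, and then uses the \emph{classical} crossing relation $2a \equiv a + x \pmod p$ to pin the remaining under-arc to $x=a$ as well. Your later worry about whether the two boundary strands on the simple side are forced to agree is well taken, but your proposed resolution (``the two strands \dots are connected \dots in such a way that any strong coloring assigns them a common value'') is not an argument: nothing on the simple side of PR2 connects those strands or constrains their colors locally, so this step as written does not go through.

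Second, and more seriously, your PR3 analysis is wrong in a way that would make the argument fail even if the move were correctly identified. You assert that ``the strand being slid across must therefore also be colored $a$.'' This is false: the sliding strand does not pass through the precrossing, and the neighborhood in condition~(2) may be taken small enough to exclude it, so nothing forces that strand to carry the color $a$. The paper handles this carefully: the two arcs through the precrossing are colored $a$, the central segment of the sliding strand is allowed an arbitrary color $b$, and the two classical crossings then force the outer segments of the sliding strand to be $c=2a-b$ (for PR3) or $c=2b-a$ (for PR$3'$). One must then check that this pattern of boundary colors is consistent with a valid coloring on the other side of the move—which it is, but this requires the explicit computation, not the blanket claim that everything is colored $a$.
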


\begin{proof}
Because $p$-colorability is an invariant of classical knots and strong $p$-colorability for {\em classical} pseudoknots is equivalent to ordinary $p$-colorability for classical knots, we can be sure that the classical Reidemeister moves preserve strong $p$-colorability. Let us, then, consider the pseudo-Reidemeister (PR) moves.

First we note that the PR1 move preserves strong $p$-colorability, for the arcs involved in the move must be colored by the same element. This is simply due to the fact that arcs cannot begin and end at a precrossing.

Considering the PR2 move, all but one arc involved in the move is forced by the precrossing condition to be colored with a single element. Thus, the remaining arc must be colored with the same element (since $a$ is the only solution to $2a=a+x  (mod\,p)$), regardless of whether we are considering the local picture before or after the move has been performed. See Figure~\ref{PRfig}.

The PR3 and PR$3'$ moves are somewhat more complicated. Consider the PR3 move where the free strand passes under the precrossing. The arcs passing through the precrossing must be the colored with the same element (let's call it $a$), but the remaining arcs may be colored differently. Consider the central arc (i.e. the one labeled with a $b$ in the bottom left corner of Figure~\ref{PRfig}). Regardless of whether or not $b$ differs from $a$, the remaining two arcs must be colored with $c=2a-b (mod p)$ (in particular, $c=a$ in the case that $b=a$). This coloring uniquely determines the valid coloring of the local picture after the PR3 move has been performed.

\begin{figure}[th]
\centerline{\includegraphics[height=1.5in]{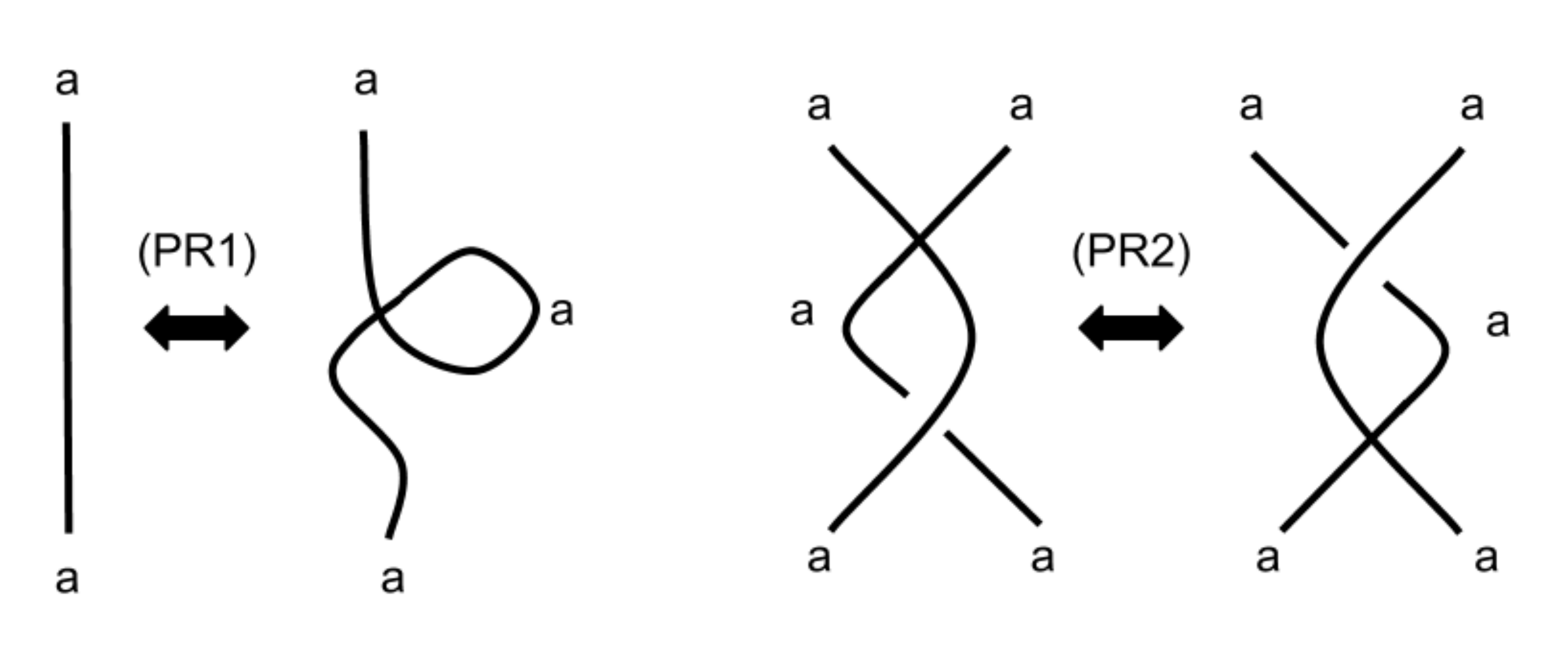}} \vspace*{8pt}
\centerline{\includegraphics[height=1.5in]{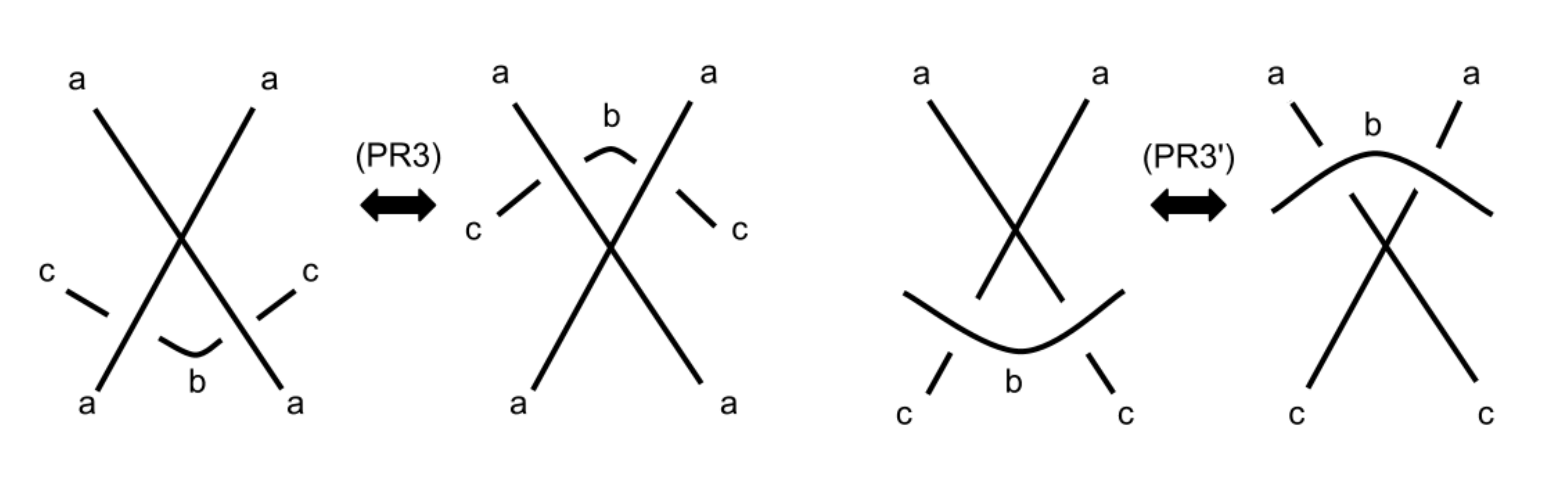}} \vspace*{8pt}
\caption{Colorings of pseudo-Reidemeister moves
\label{PRfig}}
\end{figure}


Consider the PR$3'$ move pictured in Figure~\ref{PRfig}. On the left, the top two arcs are forced by the precrossing to be colored by the same element, $a$. We color the over-arc with element $b$, which may or may not equal $a$. Then both remaining arcs are colored with $c=2b-a (mod\,p)$. Since both of the remaining arcs must be colored with the same element, we see that the induced coloring on the local picture after the move has been performed (pictured on the right) includes a valid coloring of the precrossing.
\end{proof}

Let's look at an example. According to Definition~\ref{tri}, the pseudodiagram $2\,1,2\,1,-(i,1,1)$ will not be strong 3-colorable. This is because, in every coloring of diagram $2\,1,2\,1,-(1,1,1)$ mod 3, all three colors appear in every crossing, so there is no coloring mod 3 of this pseudodiagram where only one color appears in a neighborhood of the precrossing. However, if we consider the weaker notion of $p$-colorability in Definition~\ref{modp}, our example will be colorable mod 3. Indeed, both of its resolutions, $2\,1,2\,1,-(1,1,1)$ and $2\,1,2\,1,-(-1,1,1)$, are colorable mod 3 in the classical sense (Fig. \ref{f00}). The existence of examples of this sort, prompts us to explore further the notion of colorability mod $p$ for pseudoknots. Let us recall our definition.

\bigskip

\begin{definition}\label{modp}
A pseudoknot is {\em $p$-colorable} (or {\em colorable mod} $p$) if all of its resolutions are colorable mod $p$.
\end{definition}

\begin{figure}[th]
\centerline{\includegraphics[width=4in]{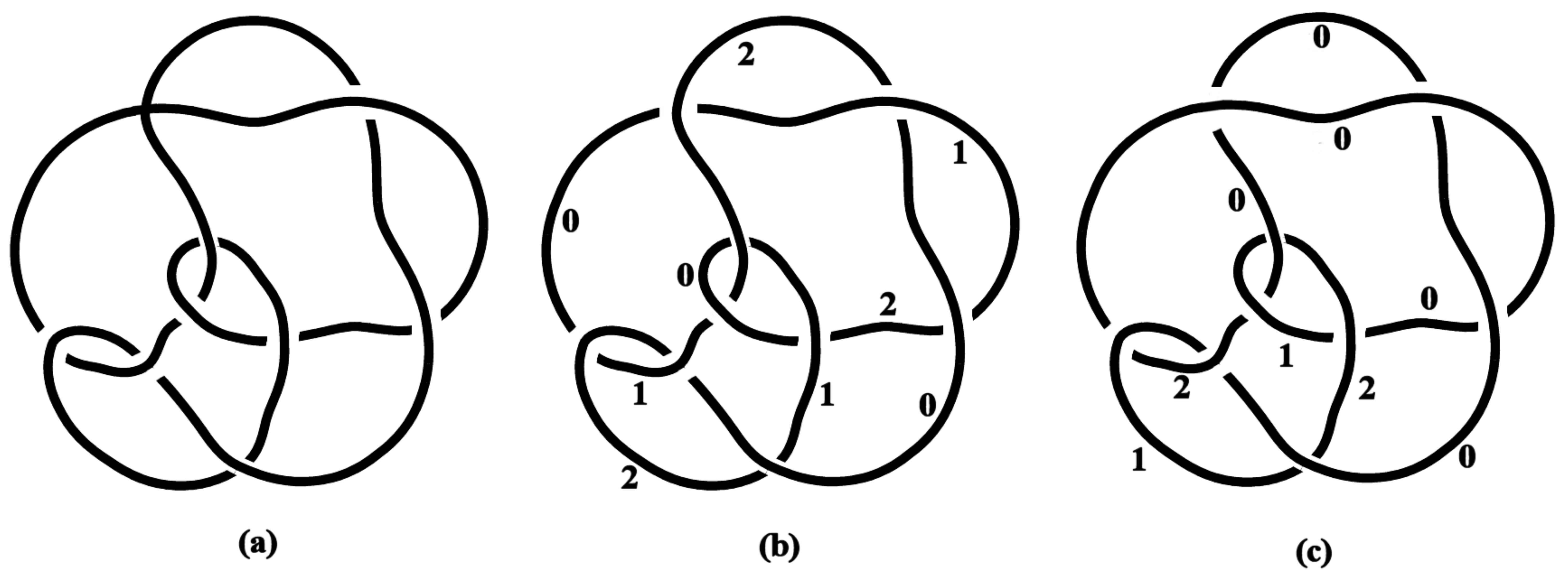}} \vspace*{8pt}
\caption{(a) Pseudodiagram $2\,1,2\,1,-(i,1,1)$; (b) resolution $2\,1,2\,1,-(1,1,1)$ and its coloring mod 3; (c) resolution $2\,1,2\,1,-(-1,1,1)$ and its coloring mod 3.
\label{f00}}
\end{figure}

\begin{theorem}Colorability mod $p$ is an invariant of pseudoknots.\end{theorem}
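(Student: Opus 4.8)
The plan is to argue straight from Definition~\ref{modp}. Recall that a pseudodiagram is colorable mod $p$ exactly when every one of its resolutions is a classical knot that is colorable mod $p$, a resolution of a pseudodiagram with $k$ precrossings being one of the $2^{k}$ classical diagrams obtained by turning each precrossing into a positive or a negative crossing. What must be shown is that this property is unchanged by the moves of Figure~\ref{rmoves}, namely the classical Reidemeister moves together with PR1, PR2, PR3 and PR$3'$; so it suffices to prove that whenever $D$ and $D'$ differ by one of these moves, $D$ is colorable mod $p$ if and only if $D'$ is. For this I would establish the following assertion $(\star)$: every resolution of $D$ is equivalent, as a classical knot, to some resolution of $D'$. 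Granting $(\star)$, and using that colorability mod $p$ is an invariant of classical knots (as recalled in the introduction), we get that $D'$ colorable mod $p$ implies $D$ colorable mod $p$; and since the inverse of every move in Figure~\ref{rmoves} is again such a move, $(\star)$ applied to the inverse move gives the reverse implication. Thus the theorem reduces to verifying $(\star)$, one move at a time.

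To do this, fix a move and let $\Delta$ be the disk in which it is performed. Outside $\Delta$ the two pseudodiagrams agree, so in particular they carry the same precrossings there; a resolution of either diagram then amounts to a choice of signs for the precrossings outside $\Delta$ together with a choice of signs for whatever precrossings lie inside $\Delta$. If the move is one of the classical Reidemeister moves, $\Delta$ contains no precrossing, so a resolution of $D$ and the resolution of $D'$ with the identical sign choices are classical diagrams differing by exactly that classical move inside $\Delta$; they represent the same knot, and $(\star)$ is immediate. If the move is PR1, then $\Delta$ contains a single precrossing --- the self-crossing of the extra curl --- on one of the two sides and none on the other, and resolving it produces either a positive or a negative kink, which a classical R1 move removes; hence any resolution of the pseudodiagram carrying the curl is equivalent, as a knot, to the resolution of the other pseudodiagram with the same outside signs, and $(\star)$ holds for PR1.

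The remaining cases, PR2, PR3 and PR$3'$, are where the real work lies, and the plan there is to resolve in all ways the precrossing(s) of the move that lie inside $\Delta$ and to check that each such resolution turns the move into an honest classical Reidemeister move between the corresponding resolutions of $D$ and $D'$. For PR2 this rests on the local observation already exploited in the proof that strong $p$-colorability is an invariant, and visible in Figure~\ref{PRfig}: the strand being moved runs over (or under) the whole local tangle, so the R2-type insertion or deletion can be carried out by a classical R2 no matter how the adjacent precrossing is resolved. For PR3 and PR$3'$, exactly one of the three crossings of an R3 configuration is a precrossing; resolving it either way leaves a classical triple point, and sliding the free strand across it is a classical R3 --- uniformly over the two resolutions and over the over/under sub-cases that distinguish PR3 from PR$3'$ (mirroring the case analysis in the proof that strong $p$-colorability is an invariant). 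In each case the correspondence ``keep the sign choices matched up in the evident way inside and outside $\Delta$'' carries a resolution of $D$ to a classically equivalent resolution of $D'$, which is $(\star)$. The hard part --- indeed essentially the only work --- is precisely this last piece of geometric bookkeeping: one must confirm, for PR2, PR3 and PR$3'$ and for each way of resolving the precrossing(s) involved, that the ``before'' and ``after'' local pictures really are classical diagrams related by a single classical Reidemeister move. Once that is checked, invariance of colorability mod $p$ drops out of the classical case.
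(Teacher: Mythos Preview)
Your argument is correct and follows essentially the same route as the paper's proof: both reduce invariance of mod~$p$ colorability to the statement that every resolution of one side of a pseudo-Reidemeister move is classically equivalent to a resolution of the other side, then invoke the classical invariance of colorability. The paper phrases this as ``apply the corresponding Reidemeister move to $D_P$ to obtain a resolution $D_P'$ of $P'$,'' treats PR1 by the same positive/negative-kink argument you give, and for PR2, PR3, PR$3'$ simply asserts the existence of a unique corresponding classical R2 or R3 move, deferring the local verification to the WeRe-set invariance argument in~\cite{ps}. Your formulation via the auxiliary statement $(\star)$ together with the observation that each move's inverse is again a listed move is a slightly cleaner packaging of the same idea, but the substance is identical.
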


\begin{proof} Suppose that a pseudodiagram $P$ is colorable mod $p$, and let $D_P$ be one of its resolutions. (So $D_P$ is a knot diagram that is colorable mod $p$.) Suppose a (classical) Reidemeister move is applied to $P$ to produce a new diagram $P'$, and apply the corresponding Reidemeister move to $D_P$ to obtain the diagram $D_P'$. Note that $D_P'$ is a resolution of $P'$, and since mod $p$ colorability  is an invariant of classical knots, $D_P'$ remains mod $p$ colorable. Thus, $P'$ is mod $p$ colorable.

Suppose, instead, that a non-classical pseudo-Reidemeister (PR) move is applied to $P$ to produce $P''$. If the move applied is the PR1 move that introduces a new precrossing, then there are two new resolutions of $P''$ that are each related to $D_P$ by a Reidemeister 1 move. (One move introduces a positive kink in $D_P$ and one introduces a negative kink.) These two resolutions are colorable mod $p$ since $D_P$ is. If the move applied is the PR1 move that reduces the number of crossings by one, then a simple R1 move may be applied to $D_P$ to get the corresponding resolution of $P''$. Again, this resolution will be colorable mod $p$ since $D_P$ is. If a PR2, PR3, or PR3' move is applied to $P$, there is a unique corresponding Reidemeister 2 or 3 move that may be applied to $D_P$ to obtain a resolution, $D_P''$ of $P''$. (Note that the proof of this statement is similar to the proof of invariance of the WeRe-set in~\cite{ps}.) Thus $D_P''$ is also colorable mod $p$. Hence, $P''$ is colorable mod $p$.
\end{proof}

\begin{example}
Pseudoknots $3\,i\,3$ and $2\,1\,i\,1\,2$ (Fig. \ref{f02}) are $p$-colorable for $p=3$.
\end{example}

\begin{figure}[th]
\centerline{\includegraphics[width=1.4in]{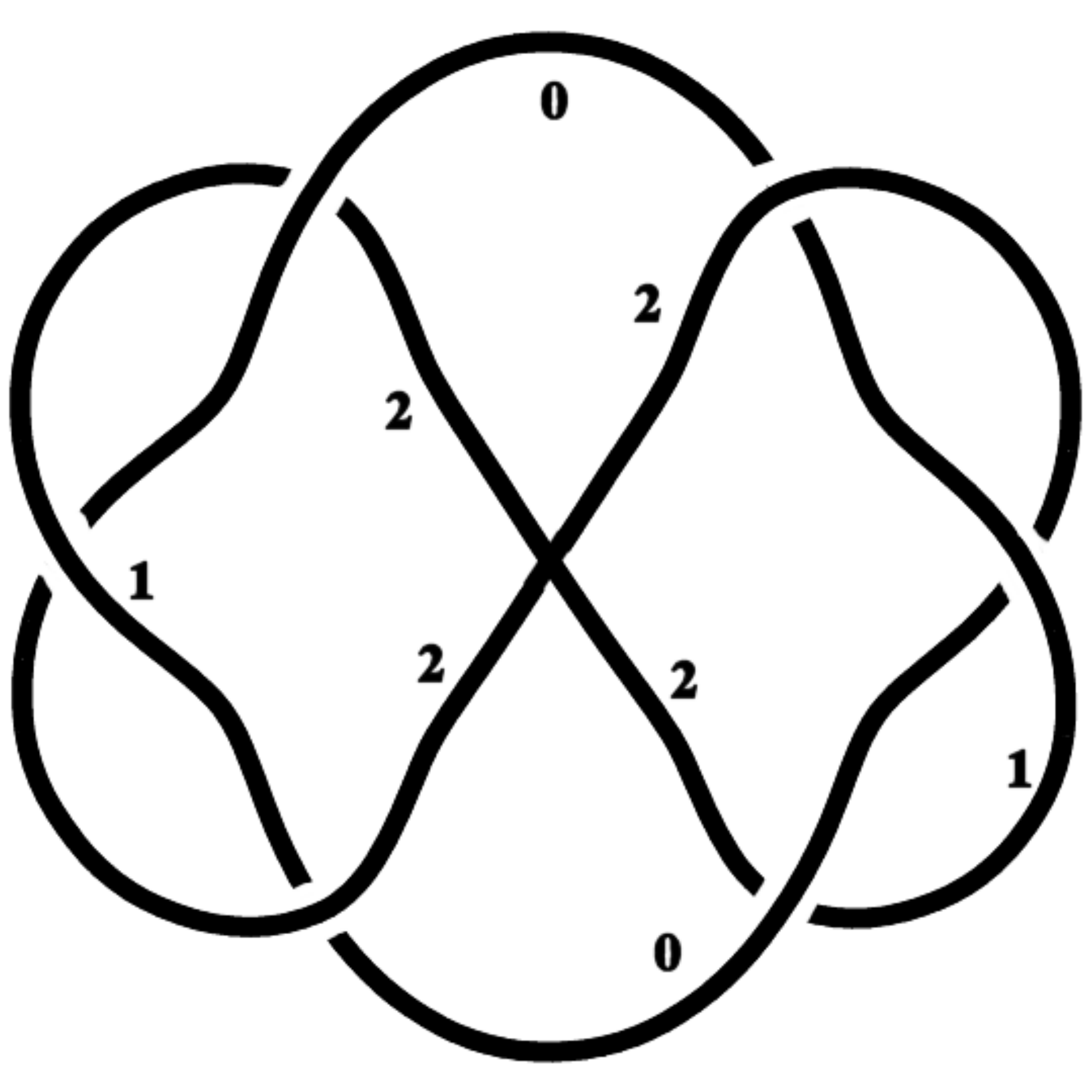}\hspace{.5in}\includegraphics[width=1.4in]{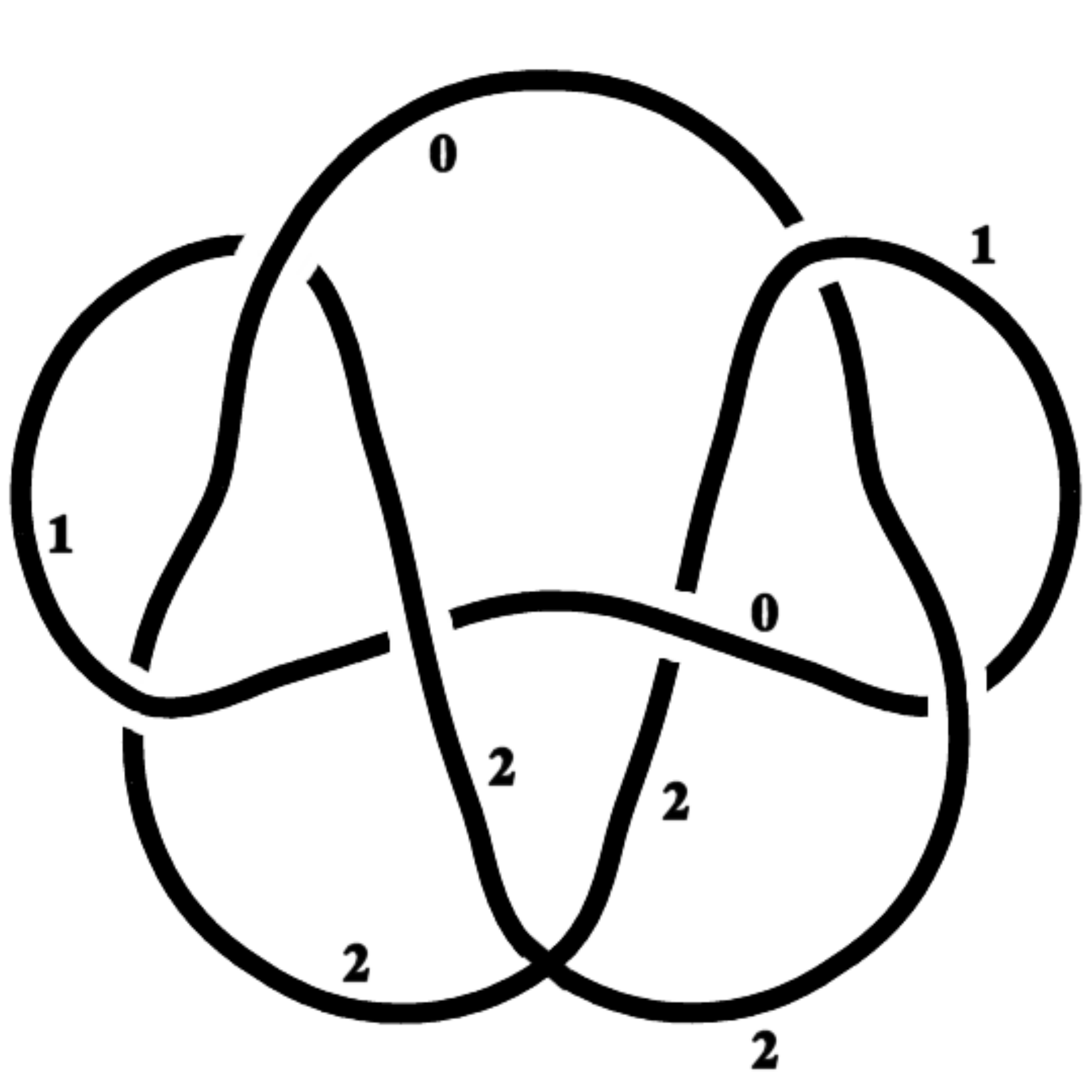}} \vspace*{8pt}
\caption{Pseudoknots $3\,i\,3$ (left) and $2\,1\,i\,1\,2$ (right) and their 3-colorings.
\label{f02}}
\end{figure}

Pseudoknots $6^*2.2\,0.i.1.1.1$ (Fig. \ref{f03}) and $6^*2.2\,0.1.1.1.i$ (Fig. \ref{f04}) are colorable mod 7 and mod 5, respectively.

\begin{figure}[th]
\centerline{\includegraphics[width=1.4in]{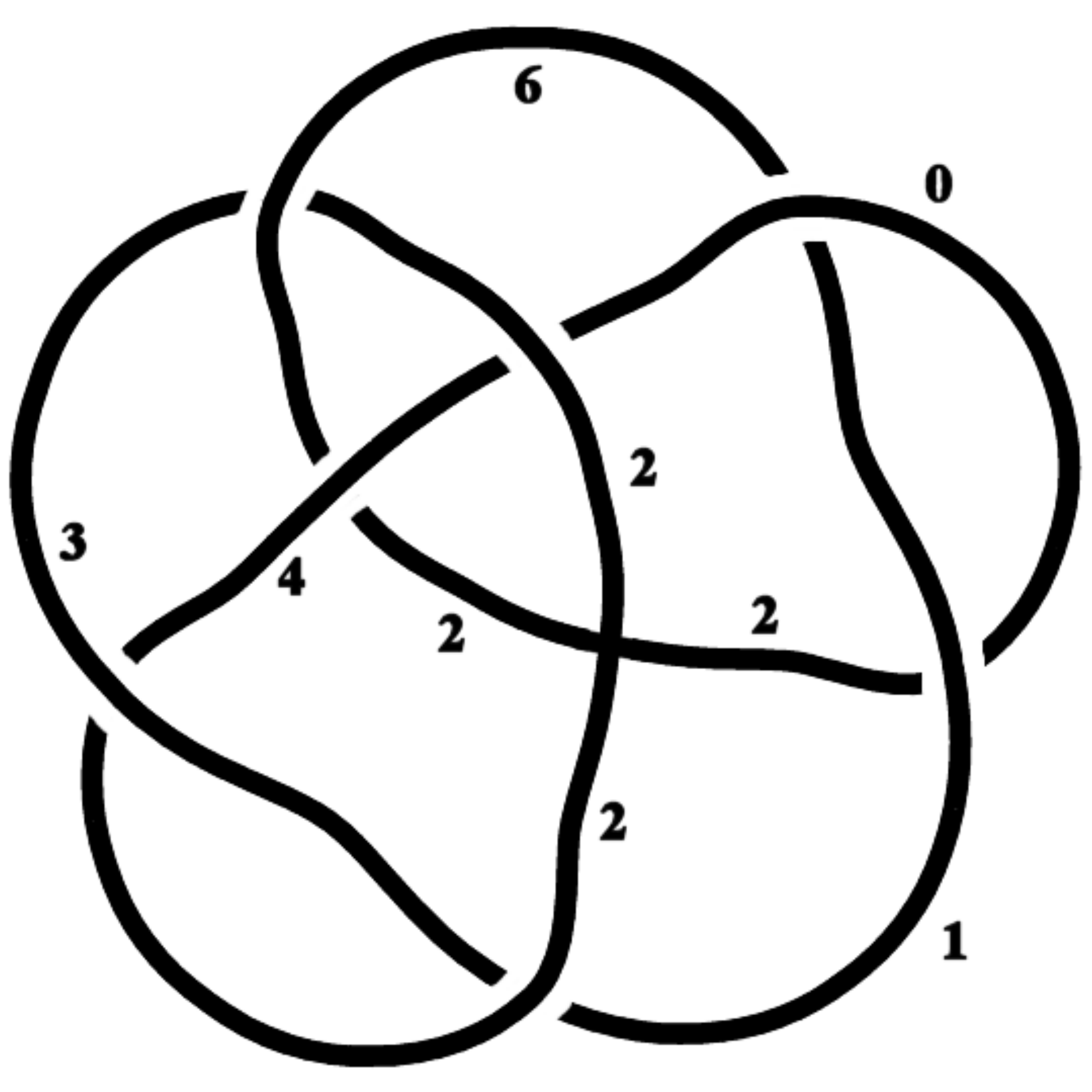}} \vspace*{8pt}
\caption{Pseudoknot $6^*2.2\,0.i.1.1.1$ and its coloring mod 7.
\label{f03}}
\end{figure}

\begin{figure}[th]
\centerline{\includegraphics[width=1.4in]{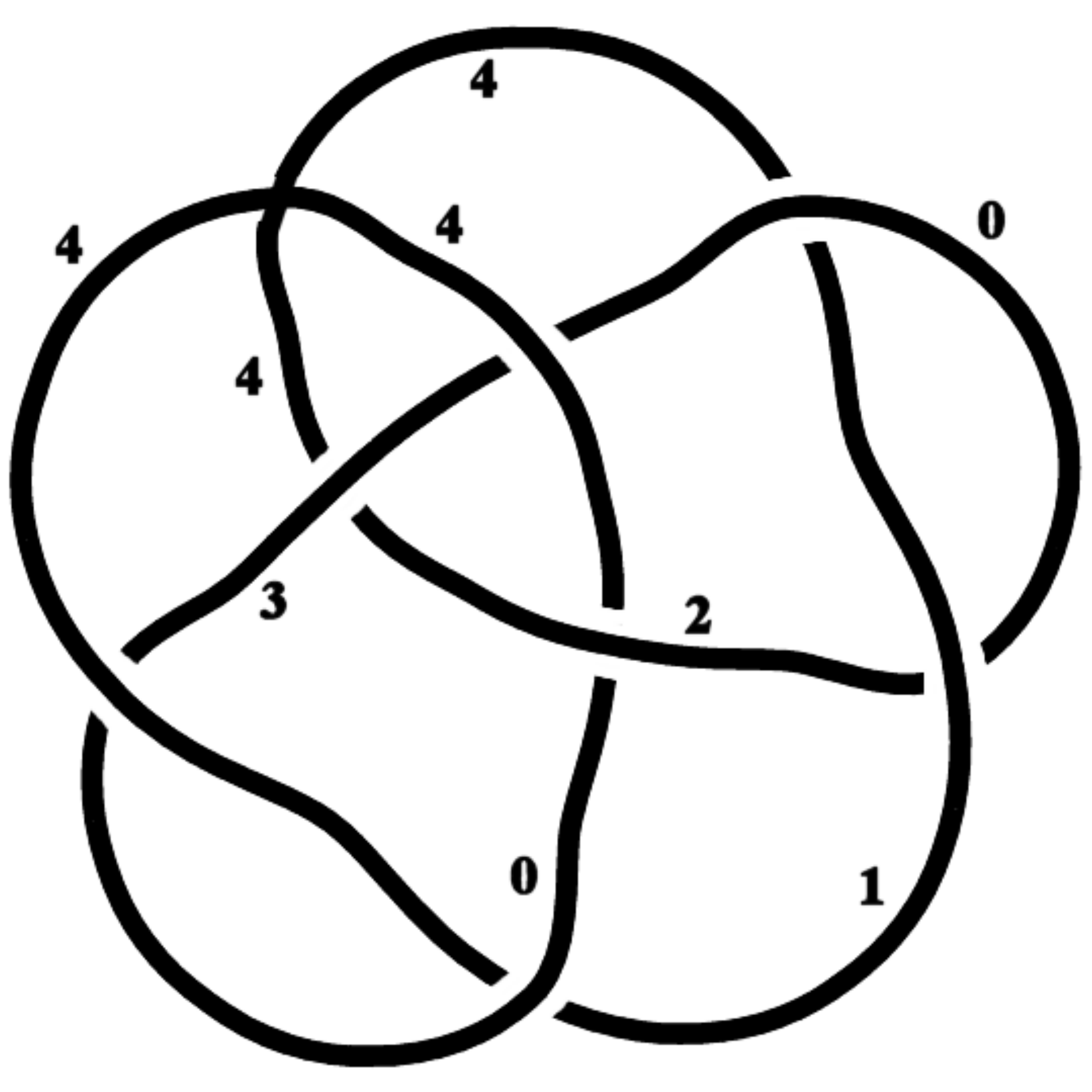}} \vspace*{8pt}
\caption{$6^*2.2\,0.1.1.1.i$ and its coloring mod 5.
\label{f04}}
\end{figure}

\begin{definition}
A pseudodiagram obtained from another pseudodiagram $K$ with $k$ precrossings ($k>1$) by resolving at most $k-1$ of its precrossings will be called a {\em pseudoresolution} of $K$.
\end{definition}

\begin{proposition} If a pseudodiagram $K$ containing $k>1$ precrossings is colorable mod $p$, then all of its pseudoresolutions are colorable mod $p$.\end{proposition}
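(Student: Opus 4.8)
The plan is to reduce the statement to the single observation that every (full) resolution of a pseudoresolution of $K$ is itself a (full) resolution of $K$, after which the conclusion is immediate from Definition~\ref{modp}. First I would fix notation: let $K$ have precrossings $P_1,\dots,P_k$, and let $K'$ be an arbitrary pseudoresolution of $K$, say obtained by assigning over/under information at some subset $\{P_{i_1},\dots,P_{i_m}\}$ of the precrossings with $m\le k-1$, leaving the precrossings indexed by the complementary set $\{P_{j_1},\dots,P_{j_{k-m}}\}$ (with $k-m\ge 1$) unresolved.

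Next I would take an arbitrary resolution $D$ of $K'$. By definition $D$ is the classical diagram obtained from $K'$ by choosing over/under information at each of the remaining precrossings $P_{j_1},\dots,P_{j_{k-m}}$. Combining these choices with the choices already made in passing from $K$ to $K'$ gives an assignment of over/under information at \emph{all} of $P_1,\dots,P_k$, and the classical diagram it produces is precisely $D$. Hence $D$ is a resolution of $K$; in other words, the set of resolutions of $K'$ is contained in the set of resolutions of $K$. Since $K$ is colorable mod $p$, Definition~\ref{modp} tells us every resolution of $K$ is colorable mod $p$, so in particular $D$ is. As $D$ was an arbitrary resolution of $K'$, every resolution of $K'$ is colorable mod $p$, and therefore $K'$ is colorable mod $p$ by Definition~\ref{modp}. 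Since $K'$ was an arbitrary pseudoresolution of $K$, this proves the proposition.

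I do not expect any real obstacle here: the only thing to be careful about is the bookkeeping that ``resolve a subset of the precrossings, then resolve the rest'' composes to ``resolve all the precrossings,'' i.e.\ that resolutions of a pseudoresolution are genuinely resolutions of $K$. There is no geometric, combinatorial, or linear-algebraic content beyond this inclusion of sets of resolutions; all the work has effectively been done by phrasing colorability mod $p$ in Definition~\ref{modp} as a condition on the entire resolution set, which makes the property automatically hereditary under taking pseudoresolutions. (One could add a remark that the hypothesis $k>1$ is only there to ensure the notion of pseudoresolution in the preceding definition is nonvacuous.)
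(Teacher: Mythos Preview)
Your argument is correct: the key observation is precisely that the set of resolutions of any pseudoresolution $K'$ of $K$ is a subset of the set of resolutions of $K$, and Definition~\ref{modp} then makes the conclusion immediate. The paper in fact states this proposition without proof, treating it as evident from the definitions; your write-up simply makes the tautological inclusion of resolution sets explicit, which is exactly the intended reasoning.
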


\begin{example}
Pseudodiagram $8^*i.1.1.1.i.1.1.1$ and its pseudoresolutions

\noindent$8^*i.1.1.1.1.1.1.1$ and $8^*i.1.1.1.-1.1.1.1$ are colorable mod 3 (Fig. \ref{f05}).
\end{example}

\begin{figure}[th]
\centerline{\includegraphics[width=1.4in]{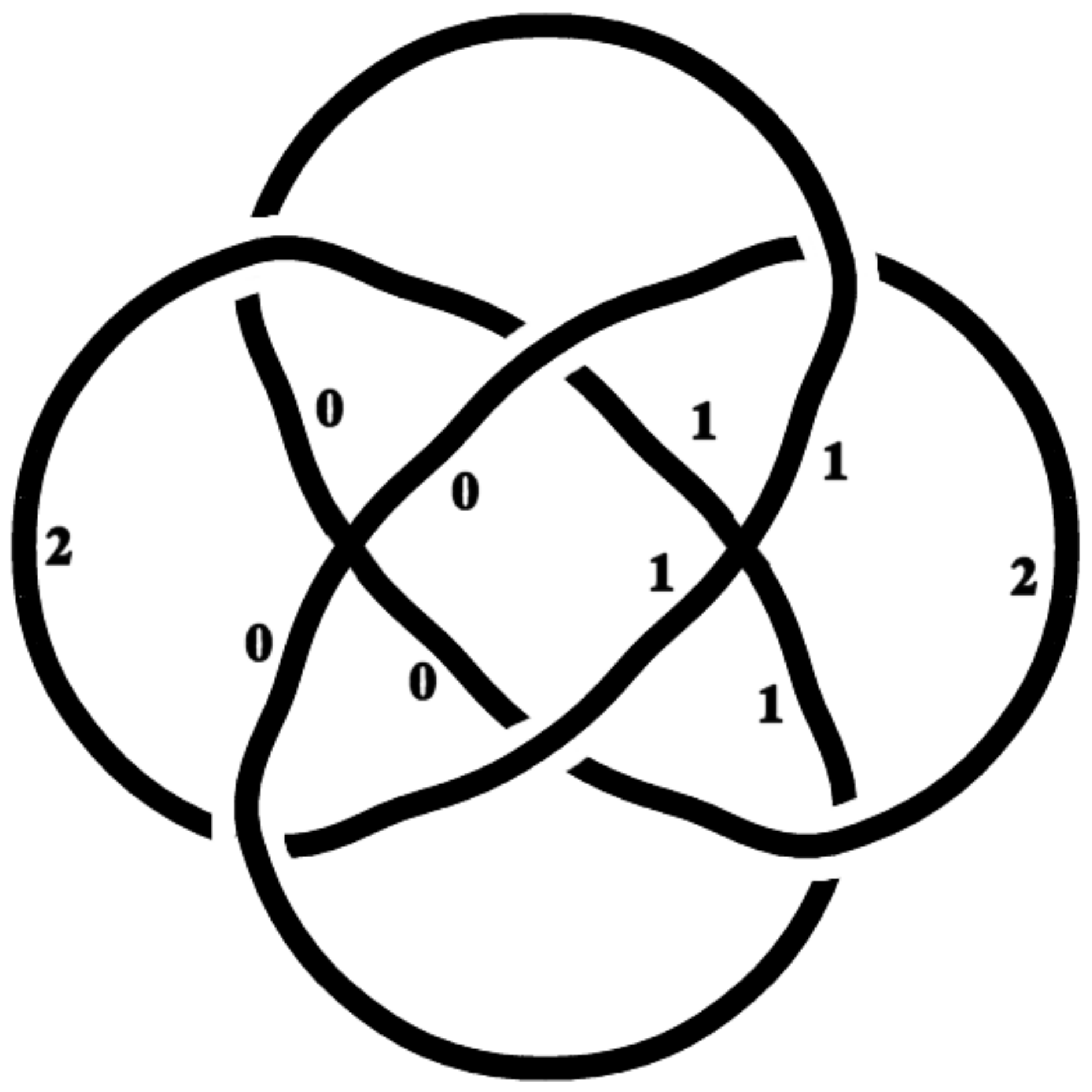}} \vspace*{8pt}
\caption{Pseudodiagram $8^*i.1.1.1.i.1.1.1$ and its coloring mod 3.
\label{f05}}
\end{figure}


\begin{figure}[th]
\centerline{\includegraphics[width=1.4in]{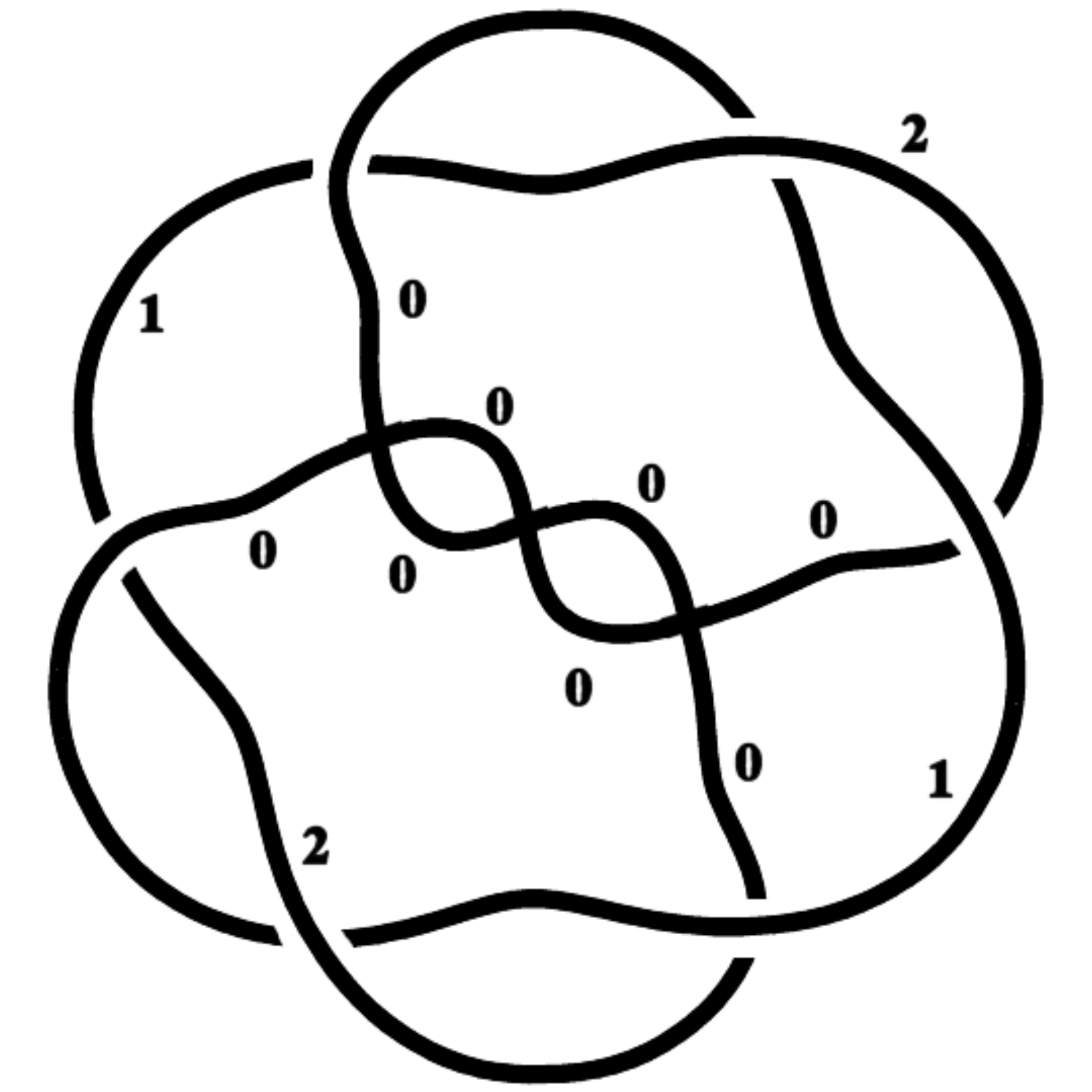}} \vspace*{8pt}
\caption{Pseudoknot $(i,i,i),3,-3$ and its coloring mod 3.
\label{f06}}
\end{figure}

For every pseudoknot, we can define its {\em coloring numbers} to be the set of numbers $p$ for which it is colorable mod $p$. For example, pseudoknot $9^*.i$ is colorable mod 3,  mod 5, and mod 15 (Fig. \ref{f07}). We see that the coloring numbers of a pseudoknot $K$ can be determined from $K$'s resolution set using the following notion.

\begin{figure}[th]
\centerline{\includegraphics[width=2.8in]{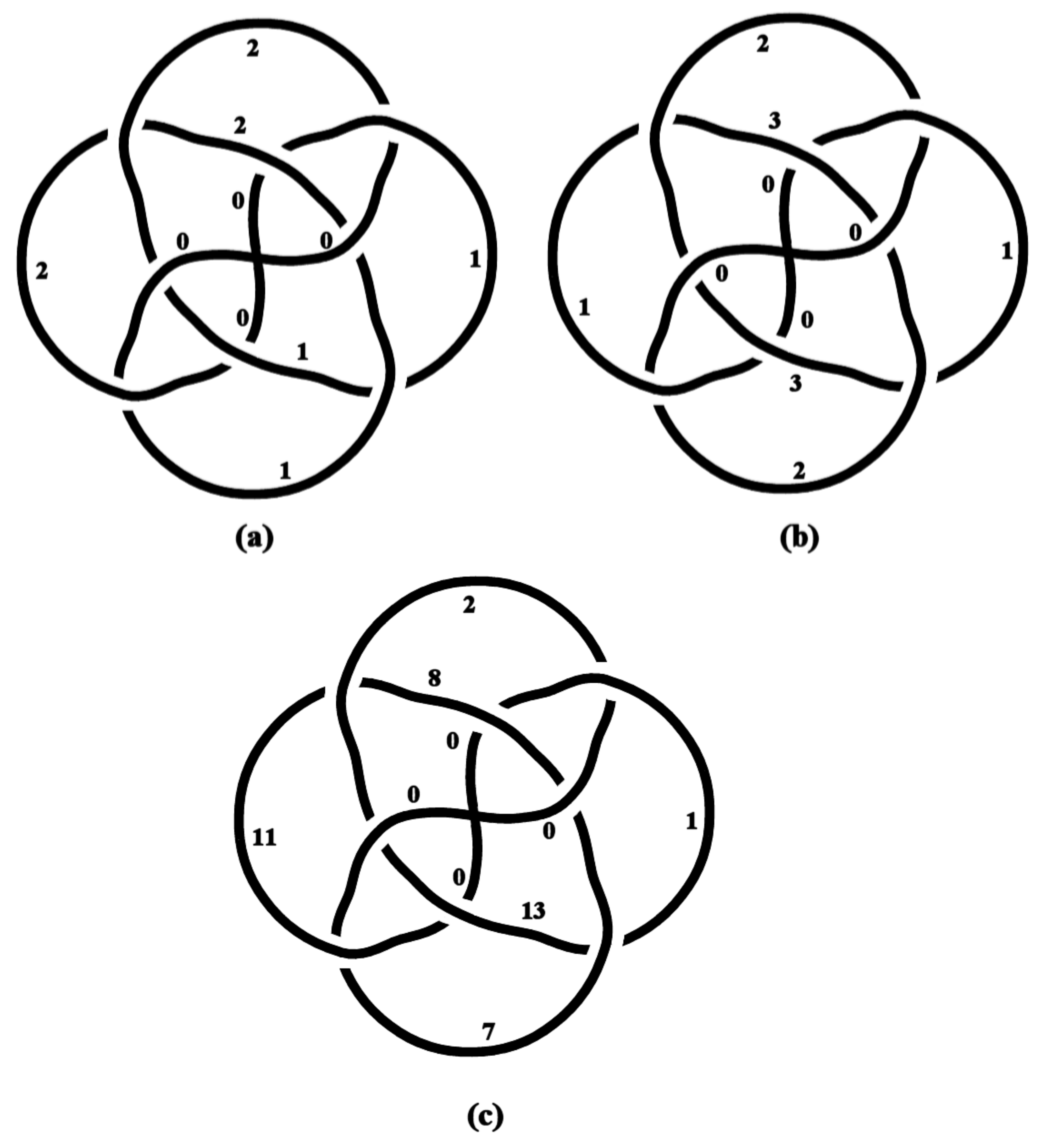}} \vspace*{8pt}
\caption{Pseudoknot $9^*.i$  and its coloring (a) mod 3; (b) mod 5; (c) mod 15.
\label{f07}}
\end{figure}

\begin{figure}[th]
\centerline{\includegraphics[width=3.8in]{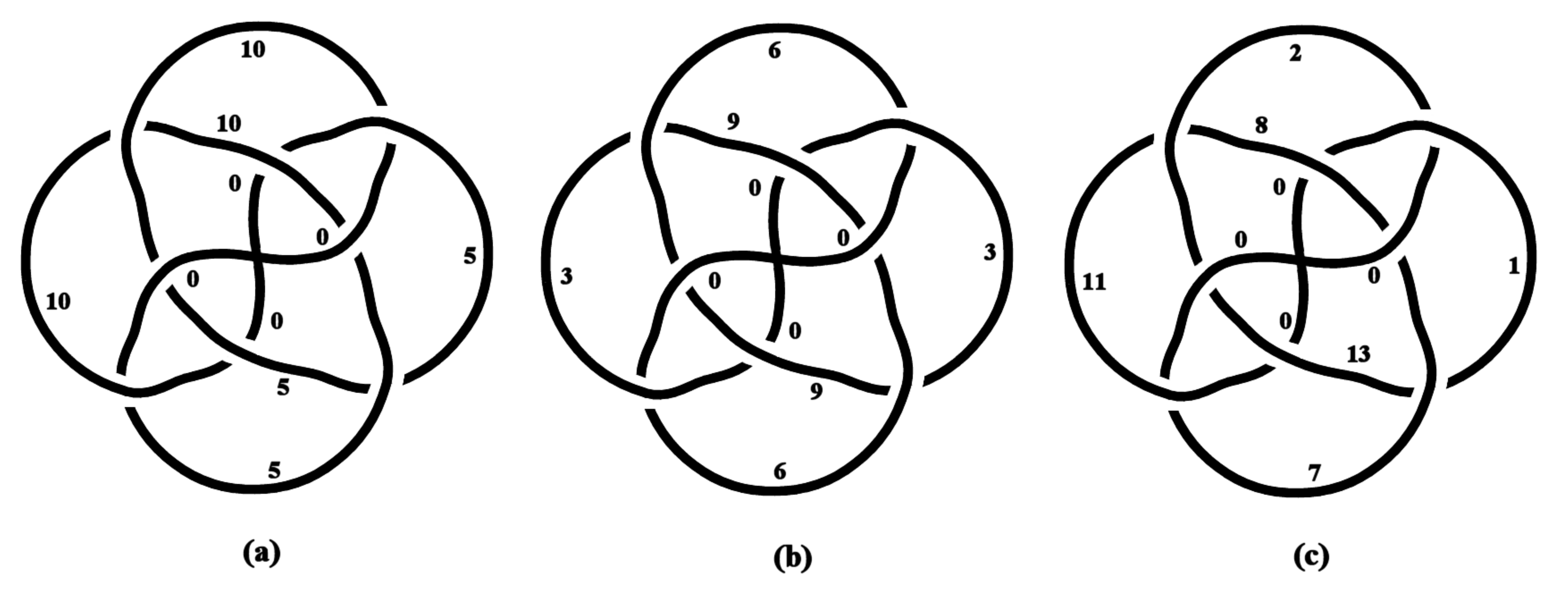}} \vspace*{8pt}
\caption{Pseudoknot $9^*.i$ with $d=15$ and its colorings for $p=15$ with (a) 3; (b) 4; (c) 7 colors.
\label{f08}}
\end{figure}

\begin{definition} Let $K$ be a pseudoknot with resolution set $\{$$K_1$, $\ldots $, $K_n$$\}$. Then $$d=GCD(Det(K_1),\ldots ,Det(K_n))$$ is called the {\em pseudodeterminant} of $K$.
\end{definition}

Note that, for classical knots, the notion of the pseudodeterminant coincides with the definition of the determinant of a knot. Also from this definition, we immediately have the following proposition, as illustrated in Fig. \ref{f08}.

\begin{proposition} If $K_1$, $\ldots $, $K_n$ are all resolutions of $K$, then $K$ will be colorable mod $p$ for every $p$ which divides its pseudodeterminant.\end{proposition}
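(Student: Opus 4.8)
The plan is to unwind the two definitions in play — the pseudodeterminant as a greatest common divisor, and colorability mod $p$ of a pseudoknot as colorability mod $p$ of every one of its resolutions — and to bridge them via the classical determinant criterion for colorability recalled in the introduction. So the proof should be little more than a chain of divisibilities.

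First I would fix an integer $p$ dividing the pseudodeterminant $d = GCD(Det(K_1),\ldots,Det(K_n))$. Since a pseudodiagram of $K$ has only finitely many precrossings, its resolution set $\{K_1,\ldots,K_n\}$ is finite and $d$ is a well-defined positive integer, so this makes sense. By the definition of the greatest common divisor, $d \mid Det(K_i)$ for every $i$; combining this with $p \mid d$ and using transitivity of divisibility gives $p \mid Det(K_i)$ for every $i$.

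Next I would invoke the classical fact recalled in the introduction: a classical knot whose determinant is divisible by $p$ is colorable mod $p$ (the relevant $n-1$ submatrix of the coloring system has determinant $\pm Det(K_i) \equiv 0$, hence a nontrivial solution over $\mathds{Z}/p\mathds{Z}$). Applying this to each $K_i$ shows every resolution of $K$ is colorable mod $p$. Finally, by Definition~\ref{modp}, a pseudoknot is colorable mod $p$ precisely when all of its resolutions are, so $K$ is colorable mod $p$, as claimed.

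The main point is that there is essentially no obstacle here — the statement is a bookkeeping consequence of the definition of the pseudodeterminant together with Definition~\ref{modp}. The only places that call for a little care are checking that the resolution set is finite so the GCD is legitimate, and noting that when $p$ is composite one reads ``colorable mod $p$'' as the existence of a nontrivial coloring; but $p \mid Det(K_i)$ already forces a nontrivial coloring in that case as well, so no separate argument for composite $p$ is required.
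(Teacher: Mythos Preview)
Your proof is correct and matches the paper's approach: the paper gives no separate proof of this proposition, merely noting that it follows immediately from the definition of the pseudodeterminant together with the classical determinant criterion for colorability recalled in the introduction. Your write-up simply makes that immediate deduction explicit, and the extra care you take about finiteness of the resolution set and composite $p$ is appropriate but not something the paper addresses.
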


We can use the notion of a pseudodeterminant to learn about the colorability of families of knots. Many families of knots given in Conway notation can be described using {\em pseudotwists}.

\begin{definition} A {\em pseudotwist} is a twist of length $n\ge 1$ which contains at least one precrossing.\end{definition}

Note that if a pseudotwist of length $n$ in a link pseudodiagram is replaced by a pseudotwist with length $n+2m$ where $m$ is an integer, then the number of components of the pseudolink is preserved. Performing the operation of replacing twists, we obtain families of pseudoknots and pseudolinks. In every family, a pseudotwist $i^n$ can be replaced by pseudotwist $(i^{n-l},(\pm 1)^l)$, ($n>l\ge 1$) in order to obtain all of its pseudoresolutions.


\bigskip

\begin{theorem}\label{Ptwist}
Suppose $L$ is the diagram of a pseudolink that contains at least one pseudotwist. Then any link pseudodiagram obtained from $L$ by replacing a pseudotwist in $L$ with a pseudotwist that has a length of the same parity will have the same pseudodeterminant.
\end{theorem}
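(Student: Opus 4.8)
The plan is to reduce Theorem~\ref{Ptwist} to a statement about determinants of the underlying classical resolutions, and then to invoke the classical twisting behavior of the determinant. Fix a pseudolink diagram $L$ with a distinguished pseudotwist $T$ of length $n$, and let $L'$ be obtained by replacing $T$ with a pseudotwist $T'$ of length $n+2m$. The pseudodeterminant of $L$ is $\mathrm{GCD}$ of the determinants of the classical resolutions of $L$, so it suffices to compare the two multisets of resolution-determinants. Every resolution of $L$ is obtained by choosing, for each precrossing in $L$, a sign; likewise for $L'$. Since $T$ contains at least one precrossing (by definition of a pseudotwist), the key observation is that a resolution of $L$ restricted to the region of $T$ is a classical twist region of length $n$, and the same choices outside $T$ together with choices inside $T'$ give a resolution of $L'$ whose twist region has length $n+2m$. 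The first step, then, is to set up a bijection between resolutions of $L$ and resolutions of $L'$ that agrees on all precrossings lying outside the pseudotwist and matches up the precrossing-choices inside $T$ and $T'$ in a controlled way.

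The second step is to show that corresponding resolutions have determinants that are congruent enough for the $\mathrm{GCD}$ to be unchanged. Here is where I would use the classical fact underlying Def.~5/6 and the discussion of families: inserting a full twist (two crossings of the same sign) into a twist region of a classical diagram multiplies the relevant coloring matrix by a controlled elementary operation, and in fact the determinant of the closure changes by $\mathrm{Det}(K_{n+2}) = \pm(a\cdot \mathrm{Det}(K_n) + b)$ type linear recursions — but more to the point, the \emph{set of primes $p$ for which the knot is $p$-colorable need not be preserved}, so I cannot hope the individual determinants are equal. Instead I will argue directly about the $\mathrm{GCD}$: since $T$ and $T'$ each contain at least one precrossing, among the resolutions of $L$ there are, for the twist region, all classical twists of length $n$ realized by the various sign patterns; the crucial point is that the two extreme resolutions inside a length-$n$ twist that still leave one precrossing free already generate, via the parity constraint, the same subfamily of classical twist-lengths modulo the behavior that matters for the $\mathrm{GCD}$. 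Concretely, I would compute the coloring matrix of a generic resolution with the twist region pulled out as a $2\times 2$ ``tangle matrix'' acting on the boundary colors, observe that a positive and a negative crossing in that region act by matrices $\bigl(\begin{smallmatrix}1&0\\ 1&1\end{smallmatrix}\bigr)$ and $\bigl(\begin{smallmatrix}1&-1\\0&1\end{smallmatrix}\bigr)$-type factors (up to conjugation), and that inserting a full twist multiplies this product by a matrix of determinant $1$ that is $\equiv I \pmod{p}$ exactly when $p$ divides a fixed quantity depending only on the rest of $L$.

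The third step is bookkeeping: having expressed $\mathrm{Det}(\text{resolution of }L)$ and $\mathrm{Det}(\text{resolution of }L')$ through the same boundary data plus a twist-length parameter of fixed parity, I take $\mathrm{GCD}$ over all resolutions on each side and check the two $\mathrm{GCD}$s coincide — using that a prime $p$ divides $\mathrm{Det}$ of \emph{every} resolution of $L$ iff it divides a quantity that is manifestly parity-dependent but twist-length-independent within a parity class. The main obstacle I anticipate is precisely this last equivalence: controlling how the determinant of an individual resolution changes under a full twist is routine linear algebra, but showing the \emph{greatest common divisor across all resolutions} is literally unchanged (not merely that the colorable primes agree) requires care — one must make sure no ``extra'' common factor is created or destroyed by the extremal sign patterns inside the twist. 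I would handle this by checking that, inside the pseudotwist, resolving all-but-one precrossing the same way (which the paragraph before the theorem notes suffices to obtain all pseudoresolutions) already exhibits two classical twist-lengths of the same parity differing by $2$, and two such lengths are enough to pin the $\mathrm{GCD}$ down to the twist-length-independent quantity, after which adding more full twists changes nothing.
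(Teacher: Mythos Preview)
Your last paragraph is the whole proof, and it is essentially what the paper does; the first two steps you sketch are either unnecessary or not quite right, and you should drop them.

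Concretely: the ``bijection between resolutions of $L$ and resolutions of $L'$'' in your first step does not exist in general, since a pseudotwist of length $n$ and one of length $n+2m$ typically have different numbers of precrossings, hence different numbers of resolutions. What you actually need (and what you correctly say later) is not a bijection but the observation that, once the resolution outside the twist is fixed, the determinants of the various inside-resolutions form a controlled set. Your tangle-matrix claim in the second step --- that a full twist acts by a matrix which is $\equiv I \pmod p$ exactly when $p$ divides a quantity depending only on the rest of $L$ --- is not correct as stated (the twist matrix itself does not see the rest of the diagram), and in any case this machinery is not needed.

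The paper's argument is the direct version of your final paragraph. Just before the proof it records the classical fact that if $d_m$ denotes the determinant of the link obtained by replacing the twist of length $n$ by one of length $n+2m$, then $d_m-d_{m-1}=r$ is constant; i.e.\ the determinant is an arithmetic progression in the effective twist length. With the outside resolution fixed, resolving the precrossings inside a pseudotwist of length $n$ produces classical twists whose effective lengths are at least two consecutive integers of parity $n$, so the corresponding determinants are at least two consecutive terms of that progression. Hence their $\gcd$ equals $\gcd(d_0,r)$, which depends only on the parity of $n$ and on the outside data, not on the particular pseudotwist. The paper carries this out by hand for the base case $(i)\to(\text{length }3)$, writing $a,b$ for the two resolution-determinants of $(i)$ and checking that each replacement $(i,1,1),(i,-1,-1),(i,i,1),(i,i,-1),(i,i,i)$ yields a $\gcd$ of the form $\gcd(a,b,2b\pm a,\ldots)=\gcd(a,b)$, then inducts. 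Your ``two twist-lengths differing by $2$ already pin down the $\gcd$, and further full twists change nothing'' is exactly this computation phrased abstractly; you can promote that sentence to the whole argument and delete the bijection and tangle-matrix steps.
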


Before we prove Theroem~\ref{Ptwist}, let us make some general remarks about the behavior of knot determinants. Suppose we have a link given in Conway notation that contains a twist of length $n$, where $n$ is an integer. Let's call this link $L(n)$. Now suppose we replace the length $n$ twist with a length $n+2m$ twist for some integer $m$ to obtain $L(n+2m)$. If we let $d_m=det(L(n+2m))$, then there exists an $r$ such that for all $m$, $d_m-d_{m-1}=r$. See~\cite{2} for more details. We make use of this result in the following proof.

\begin{proof}
Let $L$ be the diagram of a pseudolink that contains only one simple pseudotwist $(i)$. Denote the determinants of the resolutions of this precrossing by $a$ and $b$ ($a\le b$), and let $d=GCD(a,b)$ denote the pseudodeterminant of $L$. Let $L'$ be the pseudodiagram obtained from $L$ by replacing $(i)$ by another pseudotwist of odd length. In order to prove that the pseudodeterminant of $L$ equals that of $L'$, we first show as a base case that the pseudodeterminant remains unchanged if we replace $(i)$ by a pseudotwist of length three.

Notice that the pseudodeterminant remains unchanged by any permutation of crossings in a pseudotwist (since any permutation of crossings is equivalent to the original ordering by some sequence of PR2 moves). If we replace the pseudotwist $(i)$ in $L$ by $(i,1,-1)$, it is clear that pseudodeterminant remains unchanged, since the two tangles are related by a R2 move. If, on the other hand, we replace $(i)$ by $(i,1,1)$, $(i,-1,-1)$, $(i,i,1)$, $(i,i,-1)$, or $(i,i,i)$, the pseudodeterminants of the resulting pseudolinks will be $GCD(b,2b\pm a)=d$, $GCD(a,b\pm 2a)=d$, $GCD(a,b,b,2b\pm a)=d$, $GCD(b,a,a,b\pm 2a)=d$, or $GCD(a,a,a,b,b,b,b\pm 2a,2b\pm a)=d$, respectively.

To see why, consider the last example where $(i)$ is replaced by $(i,i,i)$. We claim that the determinant of this pseudoknot is $GCD(a,a,a,b,b,b,b\pm 2a,2b\pm a)=d$. If two precrossings in $(i,i,i)$ are resolved to be positive and one negative, the resulting link contains the twist $(1)$ in its Conway notation and there are three ways to choose where the negative crossing occurs. Similarly, if two precrossings in $(i,i,i)$ are resolved to be negative and one positive, the resulting link contains the twist $(-1)$ in its Conway notation and there are three ways to choose where the positive crossing occurs. This accounts for the terms $a$ and $b$ each repeated three times. The determinants $b\pm 2a$ and $2b\pm a$ correspond to a choice of all positive or all negative crossings.

In each case of replacing $(i)$ by a length three pseudotwist, the pseudodeterminant remains unchanged. The induction step is proven similarly since $(i)$ is a subtwist of any pseudotwist. Therefore, $(i)$ in $L$ may be replaced by any odd-length pseudotwist to produce a pseudolink $L'$ with the same pseudodeterminant as $L$.

The inductive proof for the even length pseudotwist is similar. Indeed, since $(i)$ is a subtwist of every pseudotwist, any pseudotwist (regardless of the parity of its length) can be replaced by any other pseudotwist with length of the same parity without changing the pseudodeterminant.
\end{proof}

For example, consider the family of pseudoknots comprised of $8^*(i^{2k})\,0::(i^{2m+1}).(-1).(-1).(-1)$ together with all of the pseudoknots obtained by replacing the pseudotwists $i^{2k}$ and $i^{2m+1}$ each by arbitrary pseudotwists of the same parity. Each pseudoknot in this family has pseudodeterminant 9 since, for instance,  $Det(8^*(i^{2})\,0::(i).(-1).(-1).(-1))=9$, by Theorem~\ref{Ptwist}.

\begin{remark} Computing the coloring numbers of pseudoknots with $n\le 9$ crossings, we find evidence that a very small portion of pseudoknots will be non-trivially colorable: from 8583 knots with $n\le 9$ crossings, only 112 will be non-trivially colorable, 70 with $d=3$, 11 with $d=5$, 5 with $d=7$, 23 with $d=9$, 1 with $d=11$, 1 with $d=15$, and 1 with $d=25$.\end{remark}

\section{Families of Pseudoknots and Conjectures}\label{example_conj}

\subsection{Pseudoknot families and their colorings}
Instead of considering particular pseudoknots and their colorings, we may explore the extension of colorability to families of pseudoknots (given in Conway notation). Take, for example, the pseudoknot $3\,i\,3$ with $d=3$. This pseudoknot is the first member of the family of pseudoknots $K=(2p+1)\,(i^{2k-1})\,(2q+1)$ which contains nontrivially colorable pseudoknots. All resolutions of $K$ are rational knots of the form $a\,b\,c$, with $a=2p+1$, $c=2q+1$, and $b\in \{-(2k-1),\ldots ,2k-1\}$, with determinant
 \begin{eqnarray*}
 D&=&a+c+abc\\
 &=&(2p+1)+(2q+1)+(2p+1)(2k-1)(2q+1)\\
 &=&2k(2p+1)(2q+1)-(4pq-1)
  \end{eqnarray*}
   for every $k$. (See~\cite{2} for more on computing determinants of rational knots.) The $GCD$ of these determinants, i.e., the pseudodeterminant, will be $d=GCD((2p+1)(2q+1),4pq-1)$. For example, pseudoknots $45\,(i^{2k-1})\,9$ have $d=27$, while pseudoknots $495\,(i^{2k-1})\,99$ have $d=297$, {\em etc.}

In Appendix A, we provide a table of families of colorable pseudoknots that are derived from pseudoknots with at most $n=9$ crossings. In the same way as for the example above (which is entry (1) in our table), we can obtain general formulae for the pseudodeterminant $d$ of pseudoknot families by using general formulae for the determinants of classical knots. For example, for family (2) from the table, the determinant is
\begin{eqnarray*}
D&=&(2p+1)-(2q+1)-(2p+1)(2k-1)(2q+1)\\
&=&2k(2p+1)(2q+1)+(4pq+4p+1)
\end{eqnarray*}
 for every $k$, so the pseudodeterminant is $d=GCD((2p+1)(2q+1),4pq+4p+1)$. Knowing that the determinant $D$ of classical rational knots and links $a\,1\,b\,1\,c$ is given by the general formula
 \begin{eqnarray*}
 D&=&abc+ab+2ac+bc+a+b+c\\
&=&(2p)(2k-1)(2q)+(2p)(2k-1)+2(2p)(2q)\\
&&+(2k-1)(2q)+(2p)+(2k-1)+(2q)\\
&=&2k(2p+1)(2q+1)+(4pq-1)
\end{eqnarray*}
 for every $k$, we also conclude that for family (3), the pseudodeterminant is again $d=GCD((2p+1)(2q+1),4pq-1)$. Proceeding in this way, we determine all entries in the table.

\subsection{The Kauffman-Harary Conjecture}


A knot diagram $D$ is said to have the {\em Kauffman-Harary property} (or {\em KH property} for short) if it can be endowed with a nontrivial coloring mod $p$ which assigns a unique color to each arc~\cite{KH}. A pseudodiagram $D$ is said to have the KH property if all of its resolutions have the KH property. The smallest example of a pseudodiagram with the KH property is the pseudodiagram $(3)\,(i)\,(-3)$. Both of its resolution diagrams $(3)\,(1)\,(-3)$ and $(3)\,(-1)\,(-3)$ have the KH property for the coloring mod 9 (Fig. \ref{f09}). The KH property is not just a property of particular pseudoknots, but of the entire families. For example, all pseudoknots $(2p+1)\,(i)\,-(2p-1)$ with $d=(2p+1)^2$ have KH property. Indeed, we find that both resolution diagrams $(2p+1)\,(1)\,-(2p+1)$ and $(2p+1)\,(-1)\,-(2p+1)$ are colorable mod $d$ with $4p+3$ different colors. Figure \ref{f10} shows the second member of this family, pseudoknot $5\,i\,-5$ with $d=25$ and its resolution diagrams $(5)\,(1)\,(-5)$ and $(5)\,(-1)\,(-5)$ colored mod 25 with 11 colors.

\begin{figure}[th]
\centerline{\includegraphics[width=4.2in]{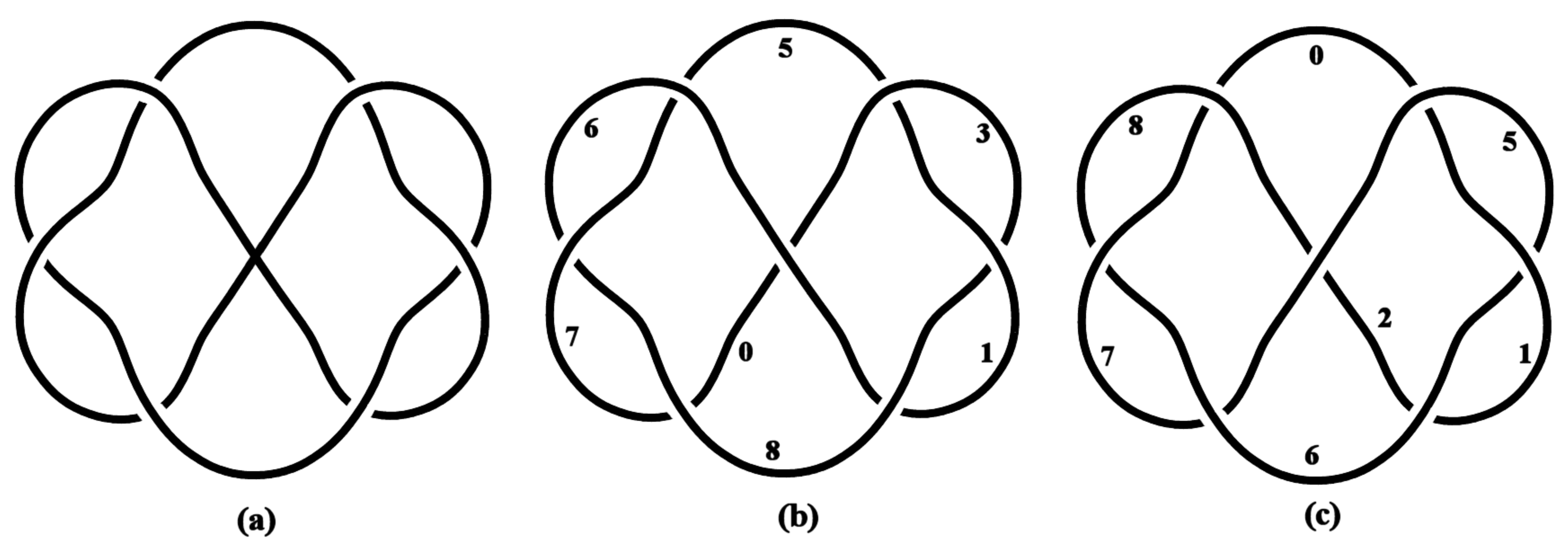}} \vspace*{8pt}
\caption{(a) Pseudoknot $(3)\,(i)\,(-3)$ with $d=9$ and colorings of its resolutions (b) $(3)\,(1)\,(-3)$ and (c) $(3)\,(-1)\,(-3)$ colored with 7 colors.
\label{f09}}
\end{figure}

\begin{figure}[th]
\centerline{\includegraphics[width=4.2in]{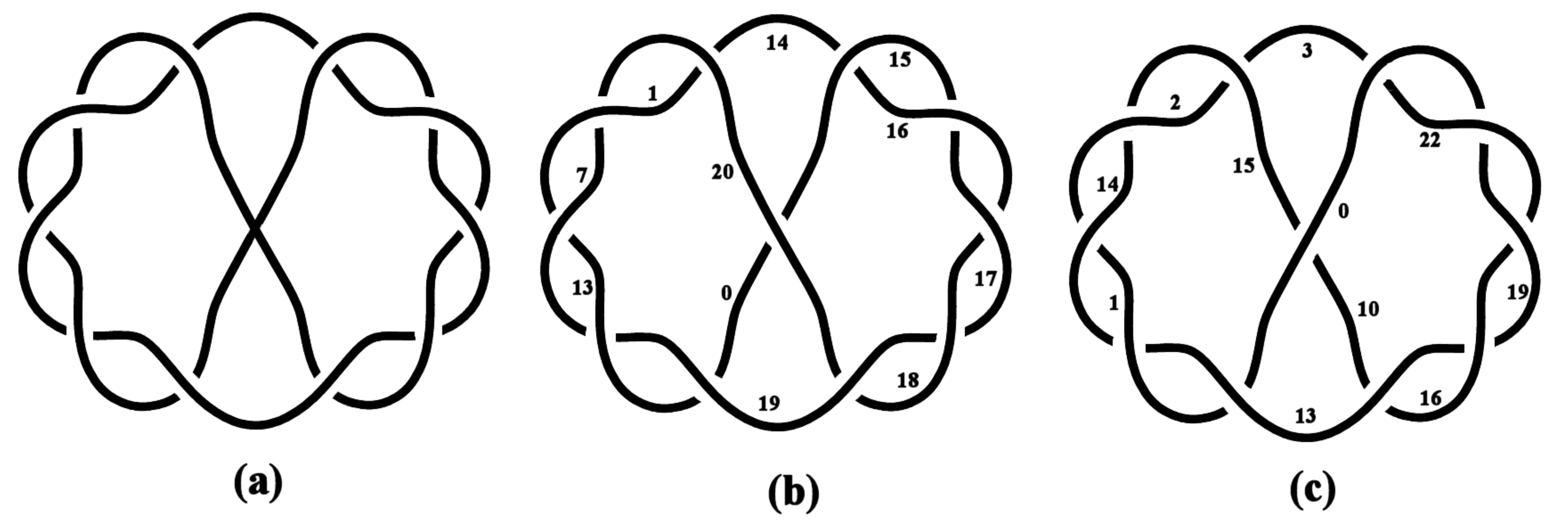}} \vspace*{8pt}
\caption{(a) Pseudoknot $(5)\,(i)\,(-5)$ with $d=25$ and colorings of its resolutions (b) $(5)\,(1)\,(-5)$ and (c) $(5)\,(-1)\,(-5)$ colored with 11 colors.
\label{f10}}
\end{figure}

\begin{figure}[th]
\centerline{\includegraphics[width=4.2in]{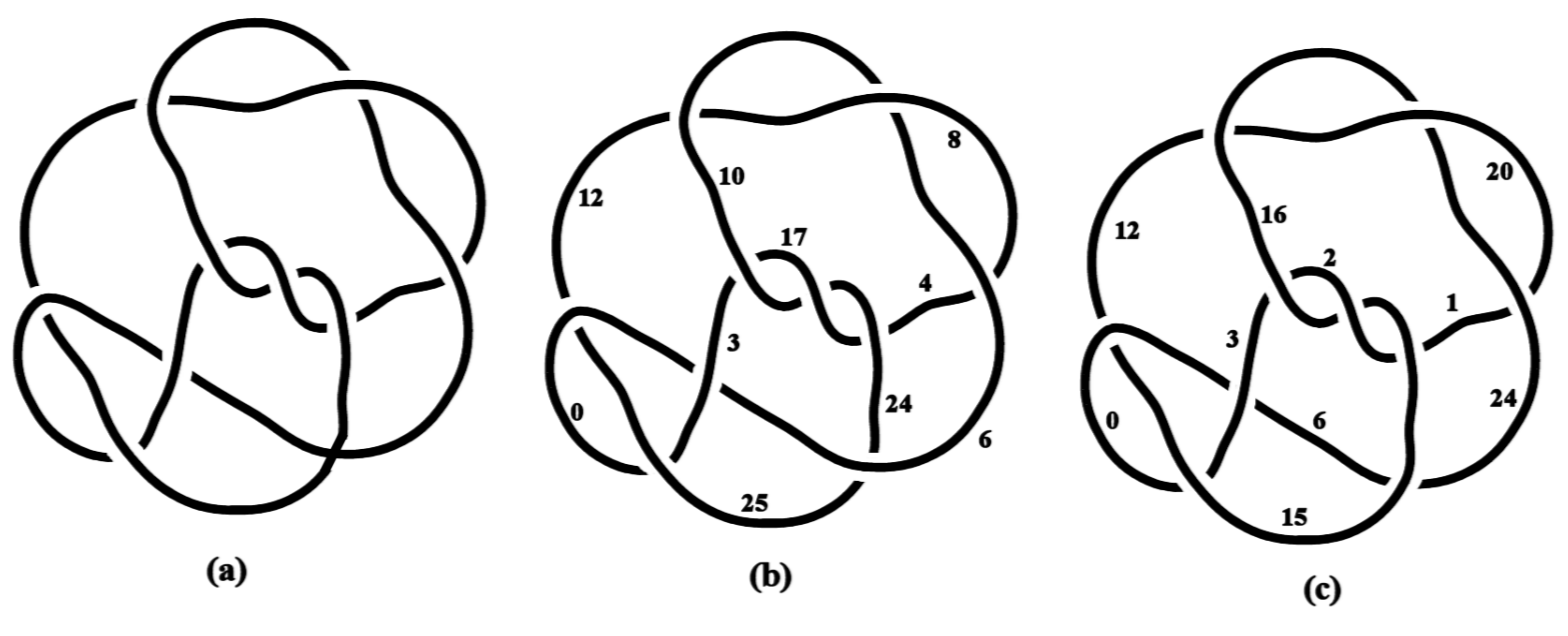}} \vspace*{8pt}
\caption{(a) Pseudoknot $2\,1\,i,3,-3$ with $d=27$ and colorings mod 27 of its resolutions (b) $2\,1\,1,3,-3$ and (c) $2\,1\,(-1),3,-3$ colored with 10 colors.
\label{f11}}
\end{figure}

\begin{figure}[th]
\centerline{\includegraphics[width=4.2in]{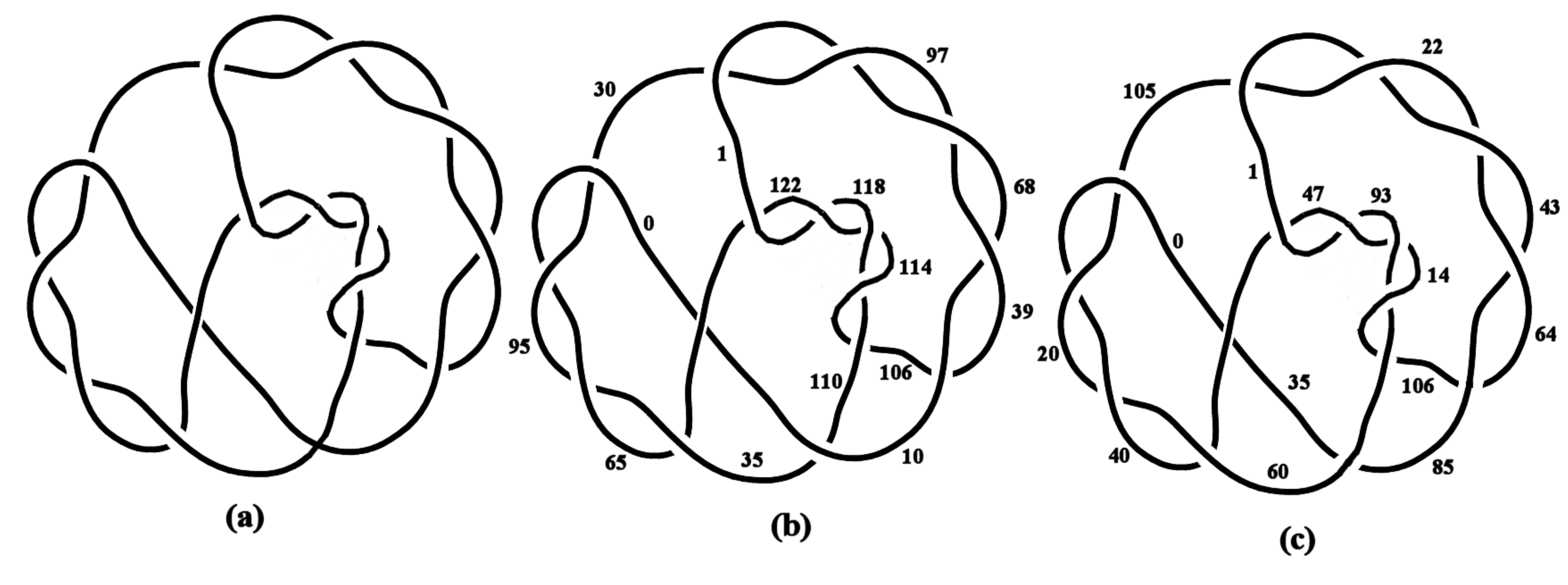}} \vspace*{8pt}
\caption{(a) Pseudoknot $4\,1\,i,5,-5$ with $d=125$ and colorings mod 125 of its resolutions (b) $4\,1\,1,5,-5$ and (c) $4\,1\,(-1),5,-5$ colored with 16 colors.
\label{f12}}
\end{figure}

Another example of a pseudoknot family with the KH property is the family $(2p)\,1\,i,3,-3$ with $d=18p+9$. Both of its resolution diagrams $(2p)\,1\,1,3,-3$ and $(2p)\,1\,(-1),3,-3$ are colorable mod $d$ with $2p+8$ colors (Fig. \ref{f11}). This family can be extended to the pseudoknot family $(2p)\,1\,i,(2p+1),-(2p+1)$ with the KH property, where $d=(2p+1)^3$, and each diagram is colorable mod $d$ with $6p+4$ colors (Fig. \ref{f12}).

\bigskip

An interesting question involving the KH property for pseudoknots relates to the idea of a pseudoalternating pseudodiagram. A pseudodiagram is {\em pseudoalternating} if it has an alternating resolution.

\begin{conjecture}
A pseudodiagram with the KH property cannot be pseudoalternating.
\end{conjecture}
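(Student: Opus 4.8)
The plan is a proof by contradiction. Suppose $D$ is a pseudodiagram that is pseudoalternating and has the Kauffman--Harary property. Since a classical alternating diagram of prime determinant already has the KH property and is trivially pseudoalternating, we read the conjecture, as is surely intended, for a pseudoknot diagram $D$ having at least one precrossing. Let $D_0$ be an alternating resolution of $D$. Because a pseudodiagram has the KH property exactly when each of its resolutions does, $D_0$ carries a nontrivial mod $p$ colouring that is injective on arcs, and so does every other resolution. The first step is a reduction to a single precrossing: if $D$ has more than one precrossing, resolve all of them but one in the manner dictated by $D_0$; the resulting pseudoresolution $D'$ has exactly two resolutions, both of which are resolutions of $D$, so $D'$ is again pseudoalternating (witnessed by $D_0$) and still has the KH property. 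Hence we may assume $D$ has one precrossing $P$ and two resolutions $D_0$ and $D_P$, the latter obtained from $D_0$ by switching the crossing $c_P$ that is the image of $P$.

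The second step records a local obstruction. If a knot diagram contains a nugatory crossing, or a reducible $R2$ bigon (two crossings bounding a bigon with one strand passing over at both), then two distinct arcs are assigned the same colour in every Fox colouring: this is a two-line calculation with the crossing relations at the pair of crossings of the bigon, respectively at the kink. Such a diagram therefore has no injective colouring. Applying this to $D_0$: an injective colouring exists, so $D_0$ has no nugatory crossing, hence is a reduced alternating diagram; write $n$ for its number of crossings, equal to its number of arcs. Because $D_0$ is alternating it contains no reducible $R2$ bigon, so every bigon of $D_0$ is a twist bigon, i.e.\ $D_0$ looks locally there like a length-two piece of a twist region. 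This settles the easy case: if $c_P$ lies in a bigon of $D_0$, switching $c_P$ converts that twist bigon into a reducible $R2$ bigon of $D_P$, so $D_P$ has no injective colouring, contradicting the KH property of $D$. (The same remark disposes, before the first reduction, of any precrossing adjacent to another precrossing.)

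The remaining case, and the heart of the matter, is that $c_P$ belongs to no bigon of $D_0$; the family $3\,i\,3$, with alternating resolution $3\,1\,3$, shows this case really occurs, and there the other resolution $3\,(-1)\,3$ fails the KH property only because its knot, the trefoil, has determinant $3$, well below its seven arcs. To attack this I would bring in the Tait graph $G$ of $D_0$ (vertices the white regions, one edge per crossing), so that $\det(D_0)=\tau(G)$, the number of spanning trees, and the standard behaviour of the Goeritz matrix under a crossing change gives $\det(D_P)=|\tau(G)-2\tau(G/e_P)|$, where $e_P$ is the edge of $c_P$. The KH hypothesis forces both $\det(D_0)$ and $\det(D_P)$ to have a divisor $\ge n$ (the $n$ distinct colours must fit inside $\mathds{Z}/p\mathds{Z}$). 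I would first try to show this is arithmetically impossible, and, failing that, show directly that every Fox colouring of $D_P$ identifies a pair of arcs. The most promising route seems to be to simplify $D_P$ by flypes and Reidemeister moves while tracking forced arc identifications: an isolated switched crossing sits at the junction of two perpendicular twist regions of $D_0$, and I expect that reorganising those tangles always exposes a nugatory crossing or a reducible bigon, so that $D_P$ can never be injectively coloured.

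The hard part will be exactly this last step. Unlike the bigon case there is no purely local obstruction in $D_P$; one must exploit the global alternating structure of $D_0$ -- the fact that switching an isolated crossing of a reduced alternating diagram creates a non-alternating crossing whose effect on the colouring matrix cannot be localised away. I would expect to handle the rational (two-bridge) case first by an explicit continued-fraction computation of the determinants and colourings, along the lines of the family computations already in this paper, then to induct on the number of tangles in a Montesinos or algebraic decomposition of $D_0$, and finally to treat the polyhedral case; it is conceivable that the cleanest argument ultimately invokes the full (now proven) Kauffman--Harary theorem for reduced alternating diagrams rather than avoiding it, and settling which is the principal uncertainty in this plan.
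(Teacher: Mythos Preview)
The statement you are attempting to prove is labeled in the paper as a \emph{Conjecture}, and the paper offers no proof whatsoever; it is presented as an open question, immediately followed by the sentence ``We conclude our exploration of pseudoknot colorability with another idea concerning a possible direction for future work.'' There is therefore nothing in the paper to compare your proposal against.

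As for the proposal itself, it is not a proof but an outline that you yourself acknowledge is incomplete. Your reduction to a single precrossing and your treatment of the bigon case are reasonable, but in the remaining case---where the switched crossing $c_P$ lies in no bigon of the alternating resolution $D_0$---you give only a sketch of strategies (spanning-tree counts, flype simplifications, induction over Conway decompositions) and explicitly flag this as ``the hard part'' with ``principal uncertainty.'' That is exactly the substantive content of the conjecture: nothing local forces a repeated colour in $D_P$, and you have not supplied the global argument. So the proposal does not settle the conjecture, and since the paper does not claim to either, the honest summary is that the problem remains open on both sides.
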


We conclude our exploration of pseudoknot colorability with another idea concerning a possible direction for future work. More open problems about colorings of pseudoknots relate to the number of colors used for coloring mod $p$. For every pseudoknot colorable mod $p$, we can define $mincol(K,p)$ and $maxcol(K,p)$ as the minimal and maximal number of colors needed for coloring $K$, where both numbers are taken over all diagrams of $K$ \cite{maxcol}. Just as in the case of classical knots, the first invariant will be very hard to compute. This is because, for every knot with $m=mincol(K,p)$ and $M=maxcol(K,p)$, there exists a diagram that is colorable using $k$ colors, where $m\le k\le M$. In other words, the coloring spectrum of every knot is the complete set $m\le k\le M$  \cite{maxcol}. The same holds for pseudoknots. For a pseudoknot $K$, $m=mincol(K,p)$ will be the maximum of minimal coloring numbers of its resolutions, and $M=maxcol(K,p)$ is equal to pseudodeterminant $d$ of $K$.


\appendix
\section{Pseudoknot Tables}

In this section, we provide a table of pseudoknot families together with their pseudodeterminants.

\begin{landscape}
\footnotesize
\noindent  \begin{tabular}{|c|c|c|} \hline
 & $K$ & $d$ \\ \hline
1) &$(2p+1)\,(i^{2k-1})\,(2q+1)$ & $GCD((2p+1)(2q+1),4pq-1)$  \\ \hline
2) &$(2p+1)\,(i^{2k-1})\,-(2q+1)$ &$GCD((2p+1)(2q+1),4pq+4p+1)$  \\ \hline
3) &$(2p)\,1\,(i^{2k-1})\,1\,(2q)$ & $GCD((2p+1)(2q+1),4pq-1)$  \\ \hline
4) &$(2p+1),(2q+1),(i^{2k})$  & $GCD((2p+1)(2q+1),p+q+1)$ \\ \hline
5) &$(2p+1),-(2q+1),(i^{2k})$  & $GCD((2p+1)(2q+1),p-q)$  \\ \hline
6) &$(2p+1),(2q)\,1,(i^{2k})$  & $GCD((2p+1)(2q+1),4pq+4q+1)$  \\ \hline
7) &$(2p+1),-(2q)\,(-1),(i^{2k})$ & $GCD((2p+1)(2q+1),4pq-1)$  \\ \hline
8) &$(2p+1)\,(i^{2k})\,1\,(2q)$ & $GCD((2p+1)(2q+1),4pq+4q+1)$  \\ \hline
9) &$(2p+1)\,(i^{2k})\,(-1)\,(-2q)$ & $GCD((2p+1)(2q+1),4pq-1)$  \\ \hline
10) &$(2p)\,1,(2q)\,1,(i^{2k})$  & $GCD((2p+1)(2q+1),4pq+p+q)$ \\ \hline
11) &$6^*(2p).(2q)\,0.(i^{2k-1})$ & $GCD(12pq-2p-2q-1,3p+3q+1)$  \\ \hline
12) &$6^*(2p).(2q)\,0.(i^{2k-1})\,0$ & $GCD(12pq-2p-2q-1,12pq+4p+4q+1)$  \\ \hline
13) &$6^*(2p).(2q)\,0.(i^{2k-1}).(-1).(-1).(-1)$  & $GCD(12pq-10p-10q+3,3p+3q-1)$ \\ \hline
14) &$6^*(2p).(2q)\,0.(i^{2k-1})\,0.(-1).(-1).(-1)$ & $GCD(12pq-10p-10q+3,12pq-4p-4q+1)$  \\ \hline
15) &$6^*(2p).(2q)\,0::(i^{2k-1})$  & $GCD(4pq+2p+2q-3,4pq+3p+3q)$ \\ \hline
16) &$6^*(2p).(2q)\,0::(i^{2k-1})\,0$  & $GCD(4pq+2p+2q-3,4pq+4p+4q+3)$  \\ \hline
17) &$8^*(i^{2k-1})::(i^{2m-1})$ & 3  \\ \hline
18) &$8^*(i^{2k-1})\,0::(i^{2m-1})$ & 3  \\ \hline
19) &$8^*(i^{2k-1})\,0::(i^{2m-1})\,0$  & 3  \\ \hline
20) &$(2p+1),(2q+1),(i^{2k})+(i^{2m-1})$ & $GCD((2p+1)(2q+1),4pq+4p+4q+3)$  \\ \hline
21) &$(2p+1),-(2q+1),(i^{2k})+(i^{2m-1})$  & $GCD((2p+1)(2q+1),4pq+4q+1)$  \\ \hline
22) &$(2p)\,(2q)\,(i^{2k-1})\,(2r)\,(2s)$  &  $GCD(16pqrs-8pqs-8prs+4pq+4rs-2p-2s+1,16pqrs+4pq+4rs+1)$  \\  \hline
23) &$(2p)\,(2q)\,(i^{2k-1})\,-(2r)\,-(2s)$ & $GCD(16pqrs+8pqs-8prs+4pq+4rs-2p+2s+1,16pqrs+4pq+4rs+1)$  \\ \hline
24) &$(2p+1),(2q)\,1,(i^{2k})+(i^{2m-1})$  & $GCD((2p+1)(2q+1),4pq+p+3q+1)$ \\ \hline
25) &$(2p+1),-(2q)\,(-1),(i^{2k})+(i^{2m-1})$ & $GCD((2p+1)(2q+1),4pq+p+q)$  \\ \hline
26) &$(2p)\,1,(2q)\,1,(i^{2k})+(i^{2m-1})$ & $GCD((2p+1)(2q+1),12pq+4p+4q+1)$  \\ \hline
27) &$6^*(i^{2k-1}).(2p):(i^{2m}).(2q)\,0$ & $GCD(12pq+4p+4q+1,4pq+4p+4q+3)$  \\ \hline
28) &$6^*(i^{2k-1})\,0.(2p):(i^{2m}).(2q)\,0$ & $GCD(12pq+4p+4q+1,4pq+4p+4q+3)$  \\ \hline
29) &$6^*.(i^{2k}):-(2p).(2q)\,0$ & $GCD(8pq+6p-4q-1,2pq+2p-3q-1)$  \\ \hline
30) &$6^*.(2p).(i^{2k-1}).-(2q).(2r)\,0.(i^{2m-1})$ & $GCD(8pqr+8pq-4pr+4p-2q+1,4pqr+4pq-4pr+2qr+q-r)$  \\ \hline
\end{tabular}

\noindent  \begin{tabular}{|c|c|c|} \hline
31) &$6^*.(2p):(i^{2k}).(2q)\,0$ & $GCD(12pq+4p+4q+1,4pq+4p+4q+3)$  \\ \hline
32) &$(2p)\,1\,1\,(i^{2k-1})\,1\,1\,(2q)$ & $GCD(2p+2q+1,(4p+1)(4q+1))$  \\ \hline
33) &$8^*(i^{2k})\,0::(i^{2m-1})$  & 3 \\ \hline
34) &$8^*(i^{2k})\,0::(i^{2m-1})\,0$  & 3  \\ \hline
35) &$8^*(i^{2k})\,0::(i^{2m-1}).(-1).(-1).(-1)$ & 9  \\ \hline
36) &$8^*(i^{2k})\,0::(i^{2m-1})\,0.(-1).(-1).(-1)$ & 9  \\ \hline
37) &$8^*(2p)\,0.(-1).(i^{2k-1}).(-1).(-1).(-1).(i^{2m-1}).(-1)$ & $GCD(8p+1,9)$  \\ \hline
38) &$8^*(2p)\,0.(-1).(i^{2k-1})\,0.(-1).(-1).(-1).(i^{2m-1}).(-1)$ & $GCD(8p+1,9)$  \\ \hline
39) &$8^*(2p)\,0.(-1).(i^{2k-1})\,0.(-1).(-1).(-1).(i^{2m-1})\,0.(-1)$ & $GCD(8p+1,9)$  \\ \hline
40) &$(i^{2k-1}),(2p+1),(2q+1)$ & $GCD(4pq-1,p+q+1)$  \\ \hline
41) &$(i^{2k-1}),(2p)\,1,(2q)\,1$ & $GCD(4pq-1,4pq+p+q)$   \\ \hline
42) &$6^*(2p)\,0.(i^{2k})\,0:(2q).(i^{2m-1})$ & $GCD(4pq-1,8pq+3p+3q+1)$  \\ \hline
43) &$6^*(2p)\,0.(i^{2k})\,0:(2q).(i^{2m-1})\,0$ & $GCD(4pq-1,(2p+1)(2q+1))$  \\ \hline
44) &$6^*(2p).(i^{2k-1}).(2q):(2r)\,0$ & $GCD(16pqr+4pq-4pr-4qr-1,16pqr+4pq+4pr+4qr+2p+2q+1)$  \\ \hline
45) &$6^*(2p).(i^{2k-1})\,0.(2q):(2r)\,0$ & $GCD(16pqr+4pq-4pr-4qr-1,4pr+4qr+p+q+1)$  \\ \hline
46) &$6^*(2p):(2q):(i^{2k})\,0$  & $GCD(4pq+4p+4q+3,4pq+3p+3q)$ \\ \hline
47) &$9^*(i^{2k-1})::::(i^{2m-1})$ & 5  \\ \hline
48) &$9^*(i^{2k-1})\,0::::(i^{2m-1})$ & 5  \\ \hline
49) &$9^*(i^{2k-1})\,0::::(i^{2m-1})\,0$ & 5  \\ \hline
50) &$9^*.(i^{2k-1}):.(i^{2m-1}):.(i^{2n-1})$ & 3  \\ \hline
51) &$9^*.(i^{2k-1})\,0:.(i^{2m-1}):.(i^{2n-1})$  & 3 \\ \hline
52) &$9^*.(i^{2k-1})\,0:.(i^{2m-1})\,0:.(i^{2n-1})$ & 3  \\ \hline
53) &$9^*.(i^{2k-1})\,0:.(i^{2m-1})\,0:.(i^{2n-1})\,0$ & 3  \\ \hline
54) &$9^*.(i^{2k-1}).(-1):(i^{2m-1}).(-1):(i^{2n-1}).(-1)$ & 9  \\ \hline
55) &$9^*.(i^{2k-1})\,0.(-1):(i^{2m-1}).(-1):(i^{2n-1}).(-1)$ & 9  \\ \hline
56) &$9^*.(i^{2k-1})\,0.(-1):(i^{2m-1})\,0.(-1):(i^{2n-1}).(-1)$  & 9 \\ \hline
57) &$9^*.(i^{2k-1})\,0.(-1):(i^{2m-1})\,0.(-1):(i^{2n-1})\,0.(-1)$  & 9 \\ \hline
58) &$6^*(i^{2k})\,0:(2p)\,0:(2q)\,0$ & $GCD(12pq+4p+4q+1,3p+3q+1)$  \\ \hline
59) &$6^*(2p)\,0.(i^{2k-1}).(2q)\,0:(2r)\,0$  & $GCD(4pq+4pr+4qr-4r-1,4pq+4pr+4qr+2p+2q+4r+1)$ \\ \hline
60) &$6^*(2p)\,0.(i^{2k-1})\,0.(2q)\,0:(2r)\,0$  & $GCD(4pq+4pr+4qr-4r-1,4pq+4pr+4qr+p+q)$ \\ \hline
\end{tabular}
\end{landscape}

\end{document}